\numberwithin{equation}{section}
\newtheorem{theorem}{Theorem}
\newtheorem{proposition}{Proposition}
\newtheorem{remark}{Remark}
\newtheorem{example}{Example}
\numberwithin{definition}{section}
\numberwithin{lemma}{section}
\numberwithin{corollary}{section}
\numberwithin{remark}{section}
\numberwithin{example}{section}
\title{Orthogonality of quasi-nature spectral polynomials of Jacobi and Laguerre type}
\author{VIKASH KUMAR$^\dagger$}
\address{$^\dagger$Department of Mathematics\\ Indian Institute of Technology, Roorkee-247667, Uttarakhand, India}
\email{vikaskr0006@gmail.com, vkumar4@mt.iitr.ac.in}
\author{A. Swaminathan$^\ddagger$}
\address{$^\ddagger$Department of Mathematics\\ Indian Institute of Technology, Roorkee-247667, Uttarakhand, India}
\email{mathswami@gmail.com, a.swaminathan@ma.iitr.ac.in}
\begin{document}
	\subjclass[2020] {Primary 42C05; 33C45; 26C10}
	\keywords{Quasi-orthogonal Polynomials; Jacobi matrix; Jacobi polynomials; Laguerre polynomials; Linear Spectral Transformations; Nevai Class}

	\begin{abstract}
	In this work, the explicit expressions of coefficients involved in quasi Christoffel polynomials of order one and quasi-Geronimus polynomials of order one are determined for Jacobi polynomials. These coefficients are responsible for establishing the orthogonality of quasi-spectral polynomials of Jacobi polynomials.  Additionally, the orthogonality of quasi-Christoffel Laguerre polynomials of order one is derived. In the process of achieving orthogonality, in both cases, one zero is located on the boundary of the support of the measure. This allows us to derive the chain sequence and minimal parameter sequence at the point lying at the end point of the support of the measure.
 Furthermore, the interlacing properties among the zeros of quasi-spectral orthogonal Jacobi polynomials and Jacobi polynomials are illustrated. Finally, we define the quasi-Christoffel polynomials of order one on the unit circle and analyze the location of their zeros for specific examples, as well as propose the problem in the general setup.
	\end{abstract}
	\maketitle
	\markboth{Vikash Kumar and A. Swaminathan}{QUASI-SPECTRAL TRANSFORMATIONS}

\begin{center}
Dedicated to the memory of Professor Lawrence A. Zalcman
\end{center}

\section{Introduction}
Suppose $\{\mathbb{P}_n(x)\}$ denotes the sequence of monic orthogonal polynomial with respect to the linear functional $\mathcal{L}$. The orthogonal polynomial $\mathbb{P}_n(x)$ for $n\geq1$ can be generated by the three term recurrence relation:
\begin{align}\label{TTRR}
	x\mathbb{P}_n(x)=\mathbb{P}_{n+1}(x)+c_{n+1}\mathbb{P}_{n}(x)+\lambda_{n+1}\mathbb{P}_{n-1}(x),
\end{align}
with initial conditions $\mathbb{P}_{-1}(x)=0, \mathbb{P}_0(x)=1$, where the recurrence parameters $c_n$ and $\lambda_{n}$ are given by:
\begin{align}
	c_{n+1}=\frac{\mathcal{L}[x\mathbb{P}^2_n(x)]}{\mathcal{L}[\mathbb{P}^2_n(x)]},~~\lambda_{n+1}=\frac{\mathcal{L}[\mathbb{P}^2_n(x)]}{\mathcal{L}[\mathbb{P}^2_{n-1}(x)]}.
\end{align}
The positivity of $\lambda_{n}$'s depend upon the definitness property of the linear functional $\mathcal{L}$ (see \cite{Chihara book}). Using \eqref{TTRR}, we obtain the tridiagonal matrix, known as the Jacobi matrix for the monic orthogonal polynomials, as:
\begin{align}
	\mathcal{J}=\left(\begin{array}{cccccc}
		c_1 & 1 & 0 & 0&0&\cdots \\
		\lambda_2 & c_2 & 1 & 0&0 &\cdot \\
		0 &  \lambda_3 & c_3& 1&0&\cdot \\
		0 & 0 & \lambda_4 & c_4 & 1&\cdot\\
		\vdots & \vdots & ~ & \ddots & \ddots&\ddots
	\end{array}\right).
\end{align}

The polynomial of degree $n$ is termed quasi-orthogonal of order $k$ with respect to a linear functional $\mathcal{L}$ over the interval $(c,d)$ if it satisfies the condition:
\begin{align}
	\mathcal{L}(x^rq(x))=0~~\text{for}~~r=0,1,...,n-k-1.
\end{align}
A necessary and sufficient condition for a monic polynomial $q(x)$ of degree $n$ to be a quasi-orthogonal polynomial of order $k$ is that the polynomial $q(x)$ can be expressed as a linear combination of orthogonal polynomials $\{\mathbb{P}_j(x)\}_{j=n-k}^{n}$ with constant coefficients, i.e., \begin{align}\label{quasi orthogonal for general order}
	q(x)=\mathbb{P}_n(x)+b^{(n)}_{n}\mathbb{P}_{n-1}(x)+b^{(n)}_{n-1}\mathbb{P}_{n-2}(x)+...+b^{(n)}_{n-k+1}\mathbb{P}_{n-k}(x),
\end{align}
where coefficients $b^{(n)}_j$'s cannot all be zero simultaneously.
The history of quasi-orthogonal polynomials dates back to the work of Riesz in 1923. In  \cite{Riesz}, Riesz  introduced quasi\hyp{}orthogonal polynomials of order one in the proof of the Hamburger moment problem. Subsequently, Shohat extended this concept to finite order in \cite{Shohat} and applied it in the study of mechanical quadrature formulas, known as the Riesz\hyp{}Shohat theorem \cite{Peherstorfer_quasi orthogonal_MOC_1996,Sawa_Uchida_CD kernel_COP_Tran. AMS_2020}. Particularly, the case $k=1$ is of special interest. The necessary and sufficient conditions on the coefficients of quasi\hyp{}orthogonal polynomials of order one are imposed in \cite{Ismail_2019_quasi-orthogonal} to achieve orthogonality. They also derive the relation between the Jacobi matrix for orthogonal polynomials and the Jacobi matrix for quasi-orthogonal polynomials after achieving orthogonality.
It is shown in \cite{Shohat} that at most $k$ zeros of the polynomial $q(x)$ defined in \eqref{quasi orthogonal for general order} lie outside the support of the measure. In \cite{Beardon_Driver_Interlace quasi OP and OP}, interlacing properties among the zeros of polynomial $q(x)$ of degree $m\leq n$ and polynomial $\mathbb{P}_n(x)$ of degree $n$ are discussed.  The properties of quasi-orthogonality of discrete orthogonal polynomials,  the Meixner polynomial, including the zeros and the interlacing property of zeros, are discussed in \cite{Jooste_Jordaan_zeros_quasi Meixner_Dolomites_2023}. For additional insights into quasi-orthogonal polynomials or self-perturbed polynomials, we encourage interested readers to explore \cite{Chihara book, When do linear, Draux2016, Duran_D-operator_Krall_polynomials_JAT_2013} and the references therein.

In this work, we consider the quasi-Christoffel and quasi-Geronimus polynomial of order one of the author's previous work \cite{Vikas_Paco_Swami_quasi-nature,Vikas_Swami_quasi-type kernel}.
This manuscript serves a dual purpose. Firstly, it focuses on deriving the explicit expressions of coefficients essential for achieving the orthogonality of quasi-Christoffel and quasi-Geronimus Jacobi polynomials. The orthogonality of quasi-Christoffel Laguerre polynomial of order one is also achieved. Secondly, it addresses the problem of identifying orthogonal polynomial systems by self-perturbing the Jacobi polynomials so that only one zero lies at one of the end point of the support of the measure.


%
%
%
%

The manuscript is organized as follows: In Section \ref{sec:Quasi-type kernel}, we discuss the orthogonality of quasi-Christoffel Laguerre polynomials of order one and quasi-Christoffel Jacobi polynomials of order one. The relation between the Jacobi matrix for the kernel polynomials and the quasi-Christoffel orthogonal polynomials is discussed. During the process of achieving orthogonality, we observe that one zero of the quasi-Christoffel Jacobi polynomial of order one and the quasi-Christoffel Laguerre polynomial of order one lies at a finite end point of the support of the measure. We present a graphical interpretation illustrating the interlacing of zeros among the Jacobi polynomials, Christoffel transformed Jacobi polynomials, and quasi-Christoffel Jacobi polynomials of order one. Analogous results hold good for Laguerre case as well.  The chain sequence and minimal parameter sequence are also computed at the point lying on the boundary of the support of the measure for  Laguerre polynomials.  In Section \ref{sec:Quasi-Geronimus}, we explore the orthogonality of quasi-Geronimus Jacobi polynomials of order one and provide a graphical representation of the interlacing property between the zeros of quasi-Geronimus and quasi-Christoffel Jacobi polynomial of order one. In Section \ref{Sec:Conclusion}, a numerical illustration is presented to understand the location of the zeros of quasi-Christoffel polynomials of order one on the unit circle involving specific examples.

\section{quasi-Christoffel polynomial of order one}\label{sec:Quasi-type kernel}

Define a linear functional at $a\in \mathbb{C}$ as:
 \begin{align*}
	\mathcal{L}^C(p(x))=\mathcal{L}((x-a)p(x)),
\end{align*}
for any polynomial $p$. The linear functional $\mathcal{L}^C$ is a perturbation of the linear functional $\mathcal{L}$, and this perturbed linear functional is known as the canonical Christoffel transformation at point $a$. The existence of the orthogonal polynomials corresponding to $\mathcal{L}^C$ is guaranteed if $a$ does not annihilate $ \mathbb{P}_n(x)$ for any $n$. The positive definiteness of $\mathcal{L}$, together with the condition that $a$ lies to the left of the interval of orthogonality, ensures the positive definiteness of $\mathcal{L}^C$. Thus, the polynomial corresponding to the linear functional $\mathcal{L}^C$ is called the kernel polynomial or Christoffel polynomial, and is given by:
\begin{equation}\label{kernel}
	\mathcal{C}_n(x;a)=\frac{1}{x-a}\left[\mathbb{P}_{n+1}(x)-\frac{\mathbb{P}_{n+1}(a)}{\mathbb{P}_n(a)} \mathbb{P}_n(x)\right], ~ n\geq 0.
\end{equation}
The polynomial $\mathcal{C}_n(x;a)$ satisfies the TTRR
\begin{align}\label{Kernel-TTRR}x\mathcal{C}_n(x;a)=\mathcal{C}_{n+1}(x;a)+c_{n+1}^c\mathcal{C}_n(x;a)+ \lambda_{n+1}^c\mathcal{C}_{n-1}(x;a), ~ n\geq 0,
\end{align}
where
\begin{align}\label{Kernel recurrence parameters}
	\lambda_{n}^c=\lambda_{n}\frac{\mathbb{P}_{n}(a)\mathbb{P}_{n-2}(a)}{\mathbb{P}_{n-1}^2(a)}, \quad c_n^c=c_{n+1}-\frac{\mathbb{P}_{n}^2(a)-\mathbb{P}_{n-1}(a)\mathbb{P}_{n+1}(a)}{\mathbb{P}_{n-1}(a)\mathbb{P}_{n}(a)}, ~n\geq 1.
\end{align}
This transformation, along with the other two (Geronimus and Uvarov), is useful in studying various generalizations of orthogonal polynomials, such as multiple orthogonal polynomials and Sobolev orthogonal polynomials. For more information on the use of linear spectral transformations, we refer the interested reader to \cite{Manas_SOP_Gaus-Borel factor_2024_AMP,Branquinho_Ana_Manas_MOP_Pearson equation_Christofell_2022_AMP}.

From TTRR given in \eqref{Kernel-TTRR}, we can write an eigenvalue equation whose eigenvectors are the sequence of Christoffel polynomials as
\begin{align}\label{EVequation_Kernelpolynomials}
\left(\begin{array}{ccccc}
	c_1^{c} & 1 & 0 & \cdots \\
	\lambda_2^{c} & c_2^{c} & 1 & \\
	0 & \lambda_3^{c} & c_3^{c} & 1 \\
	0 & 0 & \lambda_4^{c} & c_4^{c} & 1\\
	\vdots & \vdots & \ddots & \ddots & \ddots
\end{array}\right)\left(\begin{array}{c}
	\mathcal{C}_{0}(x;a)\\
	\mathcal{C}_{1}(x;a)\\
	\mathcal{C}_{2}(x;a)\\
	\mathcal{C}_{3}(x;a)\\
	\vdots
\end{array}\right)=x	\left(\begin{array}{c}
		\mathcal{C}_{0}(x;a)\\
		\mathcal{C}_{1}(x;a)\\
		\mathcal{C}_{2}(x;a)\\
		\mathcal{C}_{3}(x;a)\\
		\vdots
	\end{array}\right).
\end{align}
The linear combination of two consecutive degrees of kernel polynomials, known as quasi-type kernel polynomials, is discussed in \cite{Vikas_Swami_quasi-type kernel}. The next result provides the characterization of quasi-type kernel polynomials of order one.
\begin{theorem}{\rm\cite{Vikas_Swami_quasi-type kernel}}\label{quasi-type kernel}
	Let $\mathcal{C}_n(x;a)$ be the monic polynomial with respect to canonical Christoffel transformation, which exists for some point $a$. The monic polynomial	$\mathcal{C}_{n}^Q(x;a)$ of degree $n$ is a non trivial quasi-type kernel polynomial of order one with respect to the linear functional $\mathcal{L}^C$ if and only if there exists a sequence of constants $\gamma_n\not\equiv0$,  such that
	\begin{align}\label{Quasi-Christoffel order one}
		\mathcal{C}_{n+1}^Q(x;a)= \mathcal{C}_{n+1}(x;a) + \gamma_{n+1}\mathcal{C}_{n}(x;a).
	\end{align}
\end{theorem}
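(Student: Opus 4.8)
The plan is to reduce the statement to the general characterization of quasi-orthogonal polynomials of order one recorded in \eqref{quasi orthogonal for general order}, applied not to $\{\mathbb{P}_n\}$ but to the Christoffel sequence $\{\mathcal{C}_n(x;a)\}$. The essential observation is that $\{\mathcal{C}_n(x;a)\}$ is itself a monic orthogonal polynomial sequence, now with respect to the transformed functional $\mathcal{L}^C$ (this is exactly what \eqref{kernel} and \eqref{Kernel-TTRR} encode). Once this is in place, $\mathcal{C}_{n+1}^Q(x;a)$ being a monic quasi-type kernel polynomial of order one means precisely that it is quasi-orthogonal of order one with respect to $\mathcal{L}^C$, and the whole theorem becomes the instance $k=1$ of \eqref{quasi orthogonal for general order} for the orthogonal polynomial system $\{\mathcal{C}_n(x;a)\}$.

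For the forward direction I would expand the monic polynomial $\mathcal{C}_{n+1}^Q(x;a)$ of degree $n+1$ in the basis $\{\mathcal{C}_j(x;a)\}_{j=0}^{n+1}$, writing $\mathcal{C}_{n+1}^Q(x;a)=\sum_{j=0}^{n+1}\alpha_j \mathcal{C}_j(x;a)$ with $\alpha_{n+1}=1$ forced by monicity. The defining quasi-orthogonality conditions $\mathcal{L}^C\big(x^r \mathcal{C}_{n+1}^Q(x;a)\big)=0$ for $r=0,1,\dots,n-1$, combined with the fact that $\{x^r : 0\le r\le n-1\}$ and $\{\mathcal{C}_r(x;a): 0\le r\le n-1\}$ span the same space, force $\alpha_j=0$ for $0\le j\le n-1$ via the orthogonality relations $\mathcal{L}^C(\mathcal{C}_i\mathcal{C}_j)=0$, $i\ne j$. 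Only $\alpha_n$ and $\alpha_{n+1}=1$ survive, so $\mathcal{C}_{n+1}^Q(x;a)=\mathcal{C}_{n+1}(x;a)+\gamma_{n+1}\mathcal{C}_n(x;a)$ with $\gamma_{n+1}:=\alpha_n$; non-triviality of the quasi-type polynomial, i.e.\ its order being exactly one rather than zero, is exactly the requirement $\gamma_{n+1}\ne 0$, giving $\gamma_n\not\equiv 0$.

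For the converse I would simply test a polynomial of the prescribed form against $x^r$: for $0\le r\le n-1$ one has $\mathcal{L}^C(x^r\mathcal{C}_{n+1})=0$ and $\mathcal{L}^C(x^r\mathcal{C}_n)=0$ because $\deg x^r < n \le n+1$ and both $\mathcal{C}_n$ and $\mathcal{C}_{n+1}$ are $\mathcal{L}^C$-orthogonal to every polynomial of strictly smaller degree, whence $\mathcal{L}^C(x^r\mathcal{C}_{n+1}^Q)=0$ for $r=0,\dots,n-1$. This verifies quasi-orthogonality of order one, and $\gamma_{n+1}\ne0$ guarantees that it does not collapse to the genuinely orthogonal polynomial $\mathcal{C}_{n+1}$.

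The only genuinely delicate point, and the one I would spell out carefully, is the interpretation of the word \emph{non-trivial}: I must confirm that ``quasi-type kernel polynomial of order exactly one'' is equivalent to $\gamma_{n+1}\neq 0$, as opposed to the vacuous case $\gamma_{n+1}=0$ in which $\mathcal{C}_{n+1}^Q=\mathcal{C}_{n+1}$ reduces to an ordinary kernel polynomial (order zero). Everything else is the routine transfer of the standard quasi-orthogonality characterization from $\mathcal{L}$ to its Christoffel transform $\mathcal{L}^C$, using that the existence of $\{\mathcal{C}_n(x;a)\}$ as an orthogonal polynomial sequence is already guaranteed under the stated hypothesis that $a$ does not annihilate any $\mathbb{P}_n$.
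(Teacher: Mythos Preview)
The paper does not prove this theorem: it is quoted from the authors' earlier work \cite{Vikas_Swami_quasi-type kernel} and stated here without argument. Your proof is correct and is the natural one---recognize that $\{\mathcal{C}_n(x;a)\}$ is itself a monic orthogonal polynomial sequence with respect to $\mathcal{L}^C$ (as encoded by \eqref{kernel}--\eqref{Kernel-TTRR}), and then apply the $k=1$ instance of the general quasi-orthogonality characterization \eqref{quasi orthogonal for general order} to that sequence; both directions and the identification of non-triviality with $\gamma_{n+1}\neq 0$ go through exactly as you describe.
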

\begin{remark}
	It is noted that $(x-a)\mathcal{C}_{n+1}^Q(x;a)$ can be recognized as special case of quasi-orthogonal polynomial of order two with respect to the linear functional $\mathcal{L}$. Hence \eqref{Quasi-Christoffel order one} can be written as:
	\begin{align}\label{quasi-OP special order two}
	(x-a)\mathcal{C}_{n}^Q(x;a)=\mathbb{P}_{n+1}(x)+d_n\mathbb{P}_{n}(x)+e_n\mathbb{P}_{n-1}(x),
	\end{align}
	where $d_n=\left(\gamma_{n}-\frac{\mathbb{P}_{n+1}(a)}{\mathbb{P}_{n}(a)}\right)$ and $e_n=-\gamma_n\frac{\mathbb{P}_{n}(a)}{\mathbb{P}_{n-1}(a)}$.
\end{remark}

Note that the orthogonality of linear combinations of classical discrete orthogonal polynomials, such as Charlier, Meixner, Krawtchouk, and Hahn, is examined in \cite{Duran_D-operator_Krall_polynomials_JAT_2013}. Explicit expressions for the coefficient involved in the linear combination of Laguerre polynomials using the concept of the $\mathcal{D}$-operator associated with orthogonal polynomials to achieve orthogonality is derived in \cite{Duran_D-operator_Krall_polynomials_JAT_2013}. The resulting orthogonal polynomials are known as Krall-Laguerre-Koornwinder polynomials. Similarly, the polynomials obtained from the linear combination of two consecutive Jacobi polynomials are referred to as Krall-Jacobi-Koornwinder polynomials, also discussed in \cite{Duran_D-operator_Krall_polynomials_JAT_2013}.

 Proposition \ref{orthogonality of quasi-type kernel} examines the orthogonality of quasi-type kernel (or quasi-Christoffel) polynomials of order one using the TTRR, imposing certain assumptions on the coefficients $\gamma_n$. Throughout this manuscript, we use quasi-Christoffel polynomial instead of quasi-type kernel polynomial. The following result is part of \cite[Proposition 3]{Vikas_Swami_quasi-type kernel}.
\begin{proposition}\label{orthogonality of quasi-type kernel}
	Let $\mathcal{C}_{n}^{Q}(x;a)$ be a monic quasi-Christoffel polynomial of order one with parameter $\gamma_{n}$ such that
	\begin{align}\label{gamma n restriction cond}
		\gamma_{n}(c_{n+1}^c-c_n^c+\gamma_{n}-\gamma_{n+1})+\frac{\gamma_{n}}{\gamma_{n-1}}\lambda_{n}^c-\lambda_{n+1}^c=0, ~n\geq2.
	\end{align}
	Then the polynomials $\mathcal{C}_{n}^{Q}(x;a)$ satisfy the three-term recurrence relation
	\begin{align}\label{TTRR_QCP}
		\mathcal{C}_{n+1}^{Q}(x;a)-(x-c_{n+1}^{qc})\mathcal{C}_{n}^{Q}(x;a)+\lambda_{n+1}^{qc}\mathcal{C}_{n-1}^{Q}(x;a)=0, ~ n\geq 1,
	\end{align}
	where the recurrence parameters are given by
	\begin{align}\label{quasi-Christoffel-recurrence-parameter}
		\lambda_{n+1}^{qc}=\frac{\gamma_{n}}{\gamma_{n-1}}\lambda_{n}^c, ~~~~~~~c_{n+1}^{qc}=c_{n+1}^c+\gamma_{n}-\gamma_{n+1}.
	\end{align}
	If $\lambda_{n+1}^{qc}\neq 0$, then  $\{\mathcal{C}^Q_n(x;a)\}$ forms a monic orthogonal polynomial sequence with respect to a certain measure, denoted by $\nu$.
\end{proposition}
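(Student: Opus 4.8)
The plan is to verify the three-term recurrence \eqref{TTRR_QCP} by a direct computation carried out in the basis of kernel polynomials $\{\mathcal{C}_k(x;a)\}$, which have pairwise distinct degrees and are therefore linearly independent, and then to obtain the orthogonality conclusion from Favard's theorem. First I would use the defining relation \eqref{Quasi-Christoffel order one}, in the form $\mathcal{C}_n^Q = \mathcal{C}_n + \gamma_n\mathcal{C}_{n-1}$, to write $x\mathcal{C}_n^Q(x;a) = x\mathcal{C}_n(x;a) + \gamma_n\, x\mathcal{C}_{n-1}(x;a)$, and then apply the kernel recurrence \eqref{Kernel-TTRR} to each of $x\mathcal{C}_n$ and $x\mathcal{C}_{n-1}$. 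This expresses the left-hand side as a linear combination of $\mathcal{C}_{n+1},\mathcal{C}_{n},\mathcal{C}_{n-1},\mathcal{C}_{n-2}$ with the explicit coefficients $1$, $c_{n+1}^c+\gamma_n$, $\lambda_{n+1}^c+\gamma_n c_n^c$, and $\gamma_n\lambda_n^c$.

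Next I would expand the candidate right-hand side $\mathcal{C}_{n+1}^Q + c_{n+1}^{qc}\mathcal{C}_n^Q + \lambda_{n+1}^{qc}\mathcal{C}_{n-1}^Q$ in the same basis, again via \eqref{Quasi-Christoffel order one}, obtaining the coefficients $1$, $\gamma_{n+1}+c_{n+1}^{qc}$, $c_{n+1}^{qc}\gamma_n+\lambda_{n+1}^{qc}$, and $\lambda_{n+1}^{qc}\gamma_{n-1}$. Matching the two expansions term by term, the $\mathcal{C}_{n+1}$ coefficients agree automatically, while equating the $\mathcal{C}_n$ and $\mathcal{C}_{n-2}$ coefficients forces precisely the values $c_{n+1}^{qc}=c_{n+1}^c+\gamma_n-\gamma_{n+1}$ and $\lambda_{n+1}^{qc}=(\gamma_n/\gamma_{n-1})\lambda_n^c$ recorded in \eqref{quasi-Christoffel-recurrence-parameter}. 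The one remaining equation comes from the $\mathcal{C}_{n-1}$ coefficient, namely $\lambda_{n+1}^c+\gamma_n c_n^c = c_{n+1}^{qc}\gamma_n+\lambda_{n+1}^{qc}$; substituting the just-determined $c_{n+1}^{qc}$ and $\lambda_{n+1}^{qc}$ and simplifying collapses it to exactly the hypothesis \eqref{gamma n restriction cond}. This is the conceptual heart of the argument: the scalar constraint \eqref{gamma n restriction cond} is not an extra assumption but precisely the compatibility condition that makes the four-term kernel expansion fold back into a bona fide three-term recurrence for $\{\mathcal{C}_n^Q\}$.

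Finally, with \eqref{TTRR_QCP} established, the polynomials are monic, satisfy a three-term recurrence, and have $\lambda_{n+1}^{qc}\neq 0$, so the converse of the spectral theorem (Favard's theorem) yields a quasi-definite moment functional, hence a measure $\nu$, with respect to which $\{\mathcal{C}_n^Q(x;a)\}$ is orthogonal. I expect the main difficulty to be organizational rather than conceptual: the only genuinely delicate points are confirming that the $\mathcal{C}_{n-1}$-coefficient identity reduces cleanly to \eqref{gamma n restriction cond}, and treating the low-index base case (notably $n=1$, where $\gamma_0$ and $\lambda_1^c$ require the usual edge conventions) so that the recurrence holds for every $n\geq 1$.
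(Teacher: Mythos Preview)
The paper does not actually supply a proof of this proposition: it is quoted verbatim from the authors' earlier work \cite[Proposition~3]{Vikas_Swami_quasi-type kernel} and stated without argument. Your proposal is correct and is precisely the natural proof one would expect in that reference---expanding $x\mathcal{C}_n^Q$ in the kernel basis via \eqref{Kernel-TTRR}, matching coefficients against the candidate three-term combination, and observing that the $\mathcal{C}_{n-1}$-coefficient equation is exactly \eqref{gamma n restriction cond}. Your remark about the $n=1$ edge case (where $\gamma_0$ and $\lambda_1^c$ are not intrinsically defined by \eqref{Quasi-Christoffel order one} or \eqref{Kernel recurrence parameters}) is apt; the paper and the cited source treat this by convention, and in applications the initial parameters $c_1^{qc},\lambda_2^{qc}$ are simply fixed so that the recurrence closes for $n=1$ as well.
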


Since $\mathcal{C}_{n}^Q(x;a)$ is a self-perturbation of the Christoffel polynomials, it is natural to establish a relation between the Jacobi matrix corresponding to the polynomials generated by $\mathcal{L}^C$
and the polynomial $\mathcal{C}_{n}^Q(x;a)$ satisfying the TTRR \eqref{TTRR_QCP}. Consequently, we observe that the matrix
$\mathcal{M}$ in \eqref{matrixform-quasiChristoffel} plays a crucial role in obtaining the relation between the Jacobi matrices.

\begin{theorem}
	Let $\mathcal{C}_n(x;a)$ be a monic Christoffel polynomial with recurrence parameters $\lambda_{n}^c$ and $c_n^c$ given in \eqref{Kernel recurrence parameters}. Let also that  $\mathcal{C}_n^Q(x;a)$ be a monic quasi-Christoffel polynomial of order one with recurrence parameters  $\lambda_{n}^{qc}$ and $c_n^{qc}$ given in \eqref{quasi-Christoffel-recurrence-parameter}. Then, the corresponding Jacobi matrices relate as follows:
	\begin{align}\label{}
		\left(\begin{array}{ccccc}
			c_1^{qc} & 1 & 0 & \cdots \\
			\lambda_2^{qc} & c_2^{qc} & 1 & \\
			0 & \lambda_3^{qc} & c_3^{qc} & 1 \\
			\vdots & \vdots & \ddots & \ddots
		\end{array}\right)\mathcal{M}	=\mathcal{M}\left(\begin{array}{ccccc}
			c_1^{c} & 1 & 0 & \cdots \\
			\lambda_2^{c} & c_2^{c} & 1 & \\
			0 & \lambda_3^{c} & c_3^{c} & 1 \\
			\vdots & \vdots & \ddots & \ddots
		\end{array}\right),
	\end{align}
	where matrix $\mathcal{M}$ is given in \eqref{matrixform-quasiChristoffel}.
\end{theorem}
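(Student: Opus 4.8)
The plan is to treat $\mathcal{M}$ as the connection (change-of-basis) matrix between the two polynomial families and to read the asserted identity off the two eigenvalue equations they satisfy, rather than verifying it entrywise. First I would identify $\mathcal{M}$ explicitly. Writing the column vectors $\mathbf{C}(x)=(\mathcal{C}_0(x;a),\mathcal{C}_1(x;a),\dots)^{T}$ and $\mathbf{C}^{Q}(x)=(\mathcal{C}_0^{Q}(x;a),\mathcal{C}_1^{Q}(x;a),\dots)^{T}$, the defining relation \eqref{Quasi-Christoffel order one}, together with $\mathcal{C}_0^{Q}(x;a)=\mathcal{C}_0(x;a)=1$, is precisely the statement $\mathbf{C}^{Q}(x)=\mathcal{M}\,\mathbf{C}(x)$, where $\mathcal{M}$ is the lower bidiagonal matrix carrying $1$'s on the main diagonal and the parameters $\gamma_1,\gamma_2,\dots$ on the first subdiagonal; this is exactly the matrix displayed in \eqref{matrixform-quasiChristoffel}.

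Next I would record the two spectral identities. The Christoffel family satisfies \eqref{EVequation_Kernelpolynomials}, i.e. $\mathcal{J}^{c}\mathbf{C}(x)=x\,\mathbf{C}(x)$, where $\mathcal{J}^{c}$ denotes the tridiagonal matrix appearing on the right of the claimed relation. By Proposition \ref{orthogonality of quasi-type kernel}, under hypothesis \eqref{gamma n restriction cond} the quasi-Christoffel family obeys the three-term recurrence \eqref{TTRR_QCP}, which is equivalent to $\mathcal{J}^{qc}\mathbf{C}^{Q}(x)=x\,\mathbf{C}^{Q}(x)$, with $\mathcal{J}^{qc}$ the tridiagonal matrix on the left, whose entries are given by \eqref{quasi-Christoffel-recurrence-parameter}. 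Combining these with $\mathbf{C}^{Q}=\mathcal{M}\mathbf{C}$ yields
\[
\mathcal{J}^{qc}\mathcal{M}\,\mathbf{C}(x)=\mathcal{J}^{qc}\mathbf{C}^{Q}(x)=x\,\mathbf{C}^{Q}(x)=x\,\mathcal{M}\mathbf{C}(x)=\mathcal{M}\bigl(x\,\mathbf{C}(x)\bigr)=\mathcal{M}\mathcal{J}^{c}\mathbf{C}(x),
\]
so that $\bigl(\mathcal{J}^{qc}\mathcal{M}-\mathcal{M}\mathcal{J}^{c}\bigr)\mathbf{C}(x)=0$ identically in $x$.

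Finally I would upgrade this vanishing to a matrix identity. Set $A:=\mathcal{J}^{qc}\mathcal{M}-\mathcal{M}\mathcal{J}^{c}$. Since $\mathcal{J}^{qc}$ is tridiagonal and $\mathcal{M}$ lower bidiagonal, $A$ is banded, so each of its rows has only finitely many nonzero entries and the products are well defined. Row $i$ of $A\,\mathbf{C}(x)=0$ then reads as a finite combination $\sum_j A_{ij}\,\mathcal{C}_j(x;a)=0$ holding for every $x$; because $\deg\mathcal{C}_j(x;a)=j$, the family $\{\mathcal{C}_j(x;a)\}$ is linearly independent, forcing each $A_{ij}=0$. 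Hence $A=0$, which is the claimed relation $\mathcal{J}^{qc}\mathcal{M}=\mathcal{M}\mathcal{J}^{c}$.

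The main obstacle is making the formal manipulation with semi-infinite matrices rigorous: one must confirm that the associativity step $x(\mathcal{M}\mathbf{C})=\mathcal{M}(x\mathbf{C})$ and the bandedness argument are valid, and that the boundary rows — where the recurrences degenerate ($\mathcal{C}_{-1}=0$ and $\mathcal{C}_0^{Q}=\mathcal{C}_0$) — match consistently. An alternative, more computational route is to verify $A=0$ directly entrywise; there the burden shifts to the $(i,i-2)$ entries, whose cancellation forces one to invoke the constraint \eqref{gamma n restriction cond} alongside the definitions \eqref{quasi-Christoffel-recurrence-parameter}, and that is where all the algebra concentrates.
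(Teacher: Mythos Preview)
Your proposal is correct and follows essentially the same approach as the paper: both arguments assemble the connection relation $\mathbf{C}^{Q}=\mathcal{M}\mathbf{C}$ together with the two eigenvalue equations $\mathcal{J}^{c}\mathbf{C}=x\mathbf{C}$ and $\mathcal{J}^{qc}\mathbf{C}^{Q}=x\mathbf{C}^{Q}$, and substitute to obtain the intertwining relation. If anything, you are more careful than the paper at the last step, since you explicitly invoke bandedness and the linear independence of $\{\mathcal{C}_j(x;a)\}$ to pass from $(\mathcal{J}^{qc}\mathcal{M}-\mathcal{M}\mathcal{J}^{c})\mathbf{C}(x)=0$ to the matrix identity, whereas the paper simply asserts that the relation follows upon substitution.
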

\begin{proof}
	An eigenvalue equation can be obtained from \eqref{TTRR_QCP}, which can be written as:
	\begin{align}\label{EVequation_quasichristoffelpolynomials}
		\left(\begin{array}{ccccc}
			c_1^{qc} & 1 & 0 & \cdots \\
			\lambda_2^{qc} & c_2^{qc} & 1 & \\
			0 & \lambda_3^{qc} & c_3^{qc} & 1 \\
			\vdots & \vdots & \ddots & \ddots & \ddots
		\end{array}\right)\left(\begin{array}{c}
			\mathcal{C}^Q_{0}(x;a)\\
			\mathcal{C}^Q_{1}(x;a)\\
			\mathcal{C}^Q_{2}(x;a)\\
			\vdots
		\end{array}\right)=x	\left(\begin{array}{c}
			\mathcal{C}^Q_{0}(x;a)\\
			\mathcal{C}^Q_{1}(x;a)\\
			\mathcal{C}^Q_{2}(x;a)\\
			\vdots
		\end{array}\right).
	\end{align}
	We can write \eqref{Quasi-Christoffel order one} in the matrix form as:
	\begin{align}\label{matrixform-quasiChristoffel}
		\mathcal{M}\left(\begin{array}{c}
			\mathcal{C}_{0}(x;a)\\
			\mathcal{C}_{1}(x;a)\\
			\mathcal{C}_{2}(x;a)\\
			\vdots
		\end{array}\right)=	\left(\begin{array}{c}
			\mathcal{C}_{0}^Q(x;a)\\
			\mathcal{C}_{1}^Q(x;a)\\
			\mathcal{C}_{2}^Q(x;a)\\
			\vdots
		\end{array}\right),
	\end{align}
	where
	\begin{align}
		\mathcal{M}=\left(\begin{array}{cccccc}
			1 & 0 & 0&0&0& \cdots \\
			\gamma_1 & 1 & 0&0 &0&\cdot \\
			0 & \gamma_2 & 1& 0&\cdot \\
			\vdots & \vdots & \ddots & \ddots & \ddots&\ddots
		\end{array}\right).
	\end{align}
	By using \eqref{EVequation_Kernelpolynomials}, we obtain
	\begin{align}\label{matrixform_involve_Christoffel_quasiChristoffel}
		x\left(\begin{array}{c}
			\mathcal{C}_{0}^Q(x;a)\\
			\mathcal{C}_{1}^Q(x;a)\\
			\mathcal{C}_{2}^Q(x;a)\\
			\vdots
		\end{array}\right)=\mathcal{M}\left(\begin{array}{ccccc}
			c_1^{c} & 1 & 0 & \cdots \\
			\lambda_2^{c} & c_2^{c} & 1 & \\
			0 & \lambda_3^{c} & c_3^{c} & 1 \\
			\vdots & \vdots & \ddots & \ddots
		\end{array}\right)\left(\begin{array}{c}
			\mathcal{C}_{0}(x;a)\\
			\mathcal{C}_{1}(x;a)\\
			\mathcal{C}_{2}(x;a)\\
			\vdots
		\end{array}\right).
	\end{align}
	
	By substituting \eqref{matrixform-quasiChristoffel} and \eqref{matrixform_involve_Christoffel_quasiChristoffel} into \eqref{EVequation_quasichristoffelpolynomials}, we can obtain the relation between the Jacobi matrix corresponding to the Christoffel polynomials and the Jacobi matrix corresponding to the polynomials satisfying \eqref{TTRR_QCP}.
\end{proof}

In Subsection \ref{sub:QCJP}, we demonstrate that the quasi-Christoffel Jacobi polynomial of order one becomes orthogonal and derives its recurrence coefficients. This leads to the study of zeros of quasi-Christoffel Jacobi polynomial of order one before and after obtaining the orthogonality.   We also show that the measure associated with the quasi-Christoffel Jacobi polynomial of order one belongs to the Nevai class \cite{B.SimonSzegoDescendants}.
\subsection{ Quasi-Christoffel Jacobi Polynomial of order one }\label{sub:QCJP}Let $\mathcal{P}^{(\alpha,\beta)}_n(x)$ denote the monic Jacobi polynomials defined by the three-term recurrence relation:
\begin{align}
	\mathcal{P}^{(\alpha,\beta)}_{n+1}(x)=(x-c_{n+1})\mathcal{P}^{(\alpha,\beta)}_{n}(x)-\lambda_{n+1}\mathcal{P}^{(\alpha,\beta)}_{n-1}(x),
\end{align}
with recurrence coefficients given by
\begin{align*}
	\lambda_{n+1}&=\frac{4n(n+\alpha)(n+\beta)(n+\alpha+\beta)}{(2n+\alpha+\beta)^2(2n+\alpha+\beta+1)(2n+\alpha+\beta-1)},\\
	c_{n+1}&=\frac{\beta^2-\alpha^2}{(2n+\alpha+\beta)(2n+\alpha+\beta+2)}.
\end{align*}
The Jacobi polynomials form an orthogonal sequence on the interval $(-1,1)$ with respect to the weight function $w(x)=(1-x)^\alpha(1+x)^\beta$, where $\alpha>-1$ and $\beta>-1$ (see \cite{Chihara book}). The significance of Jacobi polynomials, including their special instances like ultraspherical polynomials, Legendre polynomials and  Chebyshev polynomials extends across various mathematical domains. One notable application lies in their connection to the spectral analysis of Laplacian and sub-Laplacian operators. Pertaining to this, \cite{Casarino_Martini_Pointwise estimates_Jacobi_CR Acad_2021} delves into the discussion on pointwise estimations for ultraspherical polynomials. The exploration of Pell's equation as it relates to Chebyshev polynomials is detailed in \cite{Lasserre_Pell's eqn_CR Acad_2023}. Moreover, these orthogonal polynomials are intricately connected to convex optimization and real algebraic geometry, as elucidated in the same source \cite{Lasserre_Pell's eqn_CR Acad_2023}.

Upon applying the Christoffel transformation at $a=-1$ to this weight function, we obtain $\tilde{w}(x)=(1-x)^\alpha(1+x)^{\beta+1}$, with $\alpha>-1$ and $\beta>-1$. The Christoffel transformed polynomial of the Jacobi polynomial is another Jacobi polynomial with parameter $(\alpha,\beta+1)$. This  Christoffel Jacobi polynomial with parameter $(\alpha,\beta+1)$ is denoted by $\mathcal{C}_n(x;-1):=\mathcal{P}^{(\alpha,\beta+1)}_n(x)$, and is expressed as:
\begin{align}\label{Christoffel-Jacobi poly}
\mathcal{C}_{n}(x;-1):=\mathcal{P}^{(\alpha,\beta+1)}_{n}(x)=\frac{1}{x+1}\left(\mathcal{P}^{(\alpha,\beta)}_{n+1}(x)+\frac{2(\beta+n)(n+\alpha+\beta)}{(2n+\alpha+\beta)(2n+\alpha+\beta-1)}\mathcal{P}^{(\alpha,\beta)}_{n}(x)\right).
\end{align}
This satisfies the three-term recurrence relation:
\begin{align*}
	\mathcal{P}^{(\alpha,\beta+1)}_{n+1}(x)=(x-c^c_{n+1})\mathcal{P}^{(\alpha,\beta+1)}_{n}(x)-\lambda^c_{n+1}\mathcal{P}^{(\alpha,\beta+1)}_{n-1}(x),
\end{align*}
where the transformed recurrence parameters are given by:
\begin{align*}
	\lambda^c_{n+1}&=\frac{4n(n+\alpha)(n+\beta+1)(n+\alpha+\beta+1)}{(2n+\alpha+\beta+1)^2(2n+\alpha+\beta+2)(2n+\alpha+\beta)},\\
	c^c_{n+1}&=\frac{(\beta+1)^2-\alpha^2}{(2n+\alpha+\beta+1)(2n+\alpha+\beta+3)}.
\end{align*}
By forming a linear combination of two consecutive degrees of Christoffel Jacobi polynomials, we define the quasi-Christoffel Jacobi polynomial of order one as:
\begin{align}\label{quasi-Christoffel Jacobi poly.}
	J^{QC}_{n}(x;-1)=\mathcal{P}^{(\alpha,\beta+1)}_{n}(x)+\gamma_{n}\mathcal{P}^{(\alpha,\beta+1)}_{n-1}(x).
\end{align}
Subsequently, our aim is to obtain the orthogonality of $J^{QC}_{n}(x;-1)$ defined in \eqref{quasi-Christoffel Jacobi poly.} and to write the compact structure of the polynomial $J^{QC}_{n}(x;-1)$.
\subsubsection{\underline{Orthogonality of Quasi-Christoffel Jacobi polynomials, $J^{QC}_{n}(x;-1)$}}
To achieve the orthogonality of $J^{QC}_{n}(x;-1)$, we must determine the value of $\gamma_n$ for $n=2,3,...$ that satisfies \eqref{gamma n restriction cond}. We have
\begin{align}\label{gamma condition for Jacobi}
	\nonumber	&-\frac{(\beta+1)^2-\alpha^2}{(\alpha+\beta+2 n-1) (\alpha+\beta+2 n+1)}+\frac{(\beta+1)^2-\alpha^2}{(\alpha+\beta+2 n+1) (\alpha+\beta+2 n+3)}+\gamma_n-\gamma_{n+1}\\
	\nonumber&\hspace{1cm}	-\frac{1}{\gamma_{n}}\frac{4 n (\alpha+n) (\beta+n+1) (\alpha+\beta+n+1)}{ (\alpha+\beta+2 n) (\alpha+\beta+2
		n+1)^2 (\alpha+\beta+2 n+2)}\\
	&\hspace{3cm}+\frac{1}{\gamma_{n-1}}\frac{4 (n-1) (\alpha+n-1) (\beta+n) (\alpha+\beta+n)}{(\alpha+\beta+2 n-2) (\alpha+\beta+2 n-1)^2 (\alpha+\beta+2 n)}=0.
\end{align}
We provide four possible solutions to \eqref{gamma condition for Jacobi}, which are as follows:

\label{Sol1-QCJP}\textbf{Solution 1.} Recursively, we obtain the explicit expression for $\gamma_{n}$ for $n=1,2,3,...$ as follows:
\begin{align}\label{Explicit gamma value for QCJP_Sol1}
	\gamma_{n}=-\frac{2(\alpha+n)(n+\alpha+\beta+1)}{(2n+\alpha+\beta+1)(2n+\alpha+\beta)},
\end{align}
which satisfies the nonlinear difference equation \eqref{gamma condition for Jacobi}. 
Thus, the polynomial $J^{QC}_{n}(x;-1)$ for $n=1,2,...$ is defined as:
\begin{align}\label{quasi-Christoffel Jacobi orthogonal}
	J^{QC}_{n}(x;-1)=\mathcal{P}^{(\alpha,\beta+1)}_{n}(x)-\frac{2(\alpha+n)(n+\alpha+\beta+1)}{(2n+\alpha+\beta+1)(2n+\alpha+\beta)}\mathcal{P}^{(\alpha,\beta+1)}_{n-1}(x),
\end{align}
which becomes orthogonal and satisfies the three-term recurrence relation
\begin{align}\label{TTRR_Quasi-Chris_Jacobi_Sol1}
	J^{QC}_{n+1}(x;-1)=(x-c^{qc}_{n+1})J^{QC}_{n}(x;-1)-\lambda^{qc}_{n+1}J^{QC}_{n-1}(x;-1),
\end{align} with recurrence coefficients given by:
\begin{align}\label{recurrence coef. quasi Chris Jacobi}
	\nonumber	\lambda^{qc}_{n+1}&=\frac{4(n-1)(n+\alpha)(n+\beta)(n+\alpha+\beta+1)}{(2n+\alpha+\beta)^2(2n+\alpha+\beta+1)(2n+\alpha+\beta-1)},\\
	c^{qc}_{n+1}&=\frac{(\beta-\alpha)(2+\beta+\alpha)}{(2n+\alpha+\beta)(2n+\alpha+\beta+2)}.
\end{align}
Hence, the sequence of polynomials $\{J^{QC}_{n}(x;-1)\}_{n=2}^{\infty}$ becomes orthogonal with respect to a certain measure.  We can obtain the compact form of \eqref{quasi-Christoffel Jacobi poly.} in terms of the Jacobi family by using the common factor $x-1$ of the polynomial $J^{QC}_{n}(x;-1)$. Consequently, one may write \eqref{quasi-Christoffel Jacobi poly.} as follows:
{\small\begin{align}\label{compactform_QCJOP_Sol1}
		J^{QC}_{n}(x;-1)=(x-1)\mathcal{P}^{(\alpha+1,\beta+1)}_{n-1}(x)=\mathcal{P}^{(\alpha,\beta+1)}_{n}(x)-\frac{2(\alpha+n)(n+\alpha+\beta+1)}{(2n+\alpha+\beta+1)(2n+\alpha+\beta)}\mathcal{P}^{(\alpha,\beta+1)}_{n-1}(x).
\end{align}}
The presence of a zero on the boundary of the interval of orthogonality ensures that the corresponding linear functional is not positive definite. Thus, the polynomial $J^{QC}_{n}(x;-1)$ for $n\geq1$ defined in \eqref{quasi-Christoffel Jacobi poly.} with $\gamma_{n}$ from \eqref{Explicit gamma value for QCJP_Sol1} becomes orthogonal with respect to the Geronimus measure $d\mu_1(x) = (1-x)^{\alpha-1}(1+x)^{\beta+1}dx, \alpha>-1, \beta>-1$.

\begin{remark}[Nevai class]
	The  monic Jacobi matrix corresponding to \eqref{TTRR_Quasi-Chris_Jacobi_Sol1}  can be expressed as:
	\begin{align}\label{Jacobi_matrix_quasi_Chris_Jacobi}
		\mathcal{J}^J_{qc}=\left(\begin{array}{ccccc}
			c_1^{qc} & 1 & 0 & \cdots \\
			\lambda_2^{qc} & c_2^{qc} & 1 & \\
			0 & \lambda_3^{qc} & c_3^{qc} & 1 \\
			\vdots & \vdots & \ddots & \ddots & \ddots
		\end{array}\right),
	\end{align}
	where the entries of the matrix are defined in \eqref{recurrence coef. quasi Chris Jacobi}.
	From \eqref{recurrence coef. quasi Chris Jacobi}, it is clear that the recurrence coefficients $\lambda^{qc}_j$ and $c^{qc}_j$ are bounded. This implies that the monic Jacobi matrix $\mathcal{J}^J_{qc}$ is bounded and all the zeros of the orthogonal polynomials $J^{QC}_{n}(x;-1)$ defined as \eqref{quasi-Christoffel Jacobi orthogonal} are bounded and lie in $(-1,1]$. For any $\alpha, \beta \in \mathbb{R}$ with $\alpha>0$, a measure is said to belong to the Nevai class, $\mathbb{N}(\alpha,\beta)$, if the recurrence parameters $\lambda_n \rightarrow \alpha$ and $c_n\rightarrow\beta$ as $n\rightarrow\infty$ (see \cite{Lubinsky_SCM_NC_ProcAMS_1991}).  Moreover, the recurrence parameters $\lambda^{qc}_{n+1}$ and $c^{qc}_{n+1}$ defined in \eqref{recurrence coef. quasi Chris Jacobi} converge to $1/4$ and $0$ respectively as $n\rightarrow\infty$. This implies that the measure associated with $\mathcal{J}^J_{qc}$ is in the Nevai class $\mathbb{N}(1/4,0)$. For more information about the Nevai class, we refer to \cite{B.SimonSzegoDescendants,Lubinsky_SCM_NC_ProcAMS_1991}.
	\end{remark}

\textbf{Solution 2.} Another solution of the nonlinear difference equation \eqref{gamma condition for Jacobi} is given by:
\begin{align}\label{Explicit gamma value for QCJP_Sol2}
	\gamma_{n}=\frac{2(\beta+n+1)(n+\alpha+\beta+1)}{(2n+\alpha+\beta+1)(2n+\alpha+\beta)}.
\end{align}
Using this value of $\gamma_n$, the polynomial $J^{QC}_{n}(x;-1)$ for $n=1,2,...$ is defined as:
{\small\begin{align}\label{quasi-Christoffel Jacobi orthogonal_Sol2}
		J^{QC}_{n}(x;-1):=(x+1)\mathcal{P}^{(\alpha,\beta+2)}_{n-1}(x)=\mathcal{P}^{(\alpha,\beta+1)}_{n}(x)+\frac{2(\beta+n+1)(n+\alpha+\beta+1)}{(2n+\alpha+\beta+1)(2n+\alpha+\beta)}\mathcal{P}^{(\alpha,\beta+1)}_{n-1}(x),
\end{align}}
becomes orthogonal and satisfies the three-term recurrence relation
\begin{align}\label{TTRR_Quasi-Chris_Jacobi_Sol2}
	J^{QC}_{n+1}(x;-1)=(x-c^{qc}_{n+1})J^{QC}_n(x;-1)-\lambda^{qc}_{n+1}J^{QC}_{n-1}(x;-1),
\end{align} with recurrence coefficients given by:
\begin{align}\label{recurrence coef. quasi Chris Jacobi_Sol2}
	\nonumber	\lambda^{qc}_{n+1}&=\frac{4(n-1)(n+\alpha-1)(n+\beta+1)(n+\alpha+\beta+1)}{(2n+\alpha+\beta)^2(2n+\alpha+\beta+1)(2n+\alpha+\beta-1)},\\
	c^{qc}_{n+1}&=\frac{(\beta+2)^2-\alpha^2}{(2n+\alpha+\beta)(2n+\alpha+\beta+2)}.
\end{align}
Thus, the orthogonality measure for the polynomial $J^{QC}_{n}(x;-1)$ for $n\geq1$ as defined in \eqref{quasi-Christoffel Jacobi orthogonal_Sol2} is $d\mu_3(x)=(1-x)^{\alpha}(1+x)^{\beta}dx$, where $\alpha>-1, \beta>-1$.

\textbf{Solution 3.} Considering \begin{align}\label{nonlinear_difference_Sol3}
	\gamma_{n} = \frac{2n(\alpha+n)}{(2n+\alpha+\beta+1)(2n+\alpha+\beta)},
\end{align}
solves equation \eqref{gamma condition for Jacobi} for $\alpha > -1$ and $\beta > -1$. With this $\gamma_n$, the polynomial $J^{QC}_{n}(x;-1)$ becomes orthogonal and reduces to the original orthogonal polynomial:
\begin{align}\label{Jacobi interms extend parameter Jacobi_Sol3}
	J^{QC}_{n}(x;-1) := \mathcal{P}^{(\alpha,\beta)}_{n}(x) = \mathcal{P}^{(\alpha,\beta+1)}_{n}(x) + \frac{2n(\alpha+n)}{(2n+\alpha+\beta+1)(2n+\alpha+\beta)} \mathcal{P}^{(\alpha,\beta+1)}_{n-1}(x).
\end{align}
The corresponding recurrence parameters are:
\begin{align*}
	\lambda^{qc}_{n+1} = \lambda_{n+1}, \quad c^{qc}_{n+1} = c_{n+1}.
\end{align*}
The polynomial $J^{QC}_{n}(x;-1)$ for $n\geq0$ defined in \eqref{quasi-Christoffel Jacobi poly.} with $\gamma_{n}$ from \eqref{nonlinear_difference_Sol3} is orthogonal with respect to the measure, $d\mu_3(x)=(1-x)^{\alpha}(1+x)^{\beta}dx, \alpha>-1, \beta>-1$.

\textbf{Solution 4.} The parameter
\begin{align}
	\gamma_{n} = -\frac{2n(\beta+n+1)}{(2n+\alpha+\beta+1)(2n+\alpha+\beta)},
\end{align}
satisfies \eqref{gamma condition for Jacobi}. Thus the polynomial $J^{QC}_{n}(x;-1)$ defined in \eqref{quasi-Christoffel Jacobi poly.} can be written as:
\begin{align}\label{QCJOP_Sol4}
	J^{QC}_{n}(x;-1) := \mathcal{P}^{(\alpha-1,\beta+1)}_{n}(x) = \mathcal{P}^{(\alpha,\beta+1)}_{n}(x) - \frac{2n(\beta+n+1)}{(2n+\alpha+\beta+1)(2n+\alpha+\beta)} \mathcal{P}^{(\alpha,\beta+1)}_{n-1}(x),
\end{align}
and the recurrence parameters are given by
\begin{align}\label{recurrence coef. quasi Chris Jacobi_Sol4}
	\nonumber	\lambda^{qc}_{n+1}&=\frac{4n(n+\alpha-1)(n+\beta+1)(n+\alpha+\beta)}{(2n+\alpha+\beta)^2(2n+\alpha+\beta+1)(2n+\alpha+\beta-1)},\\
	c^{qc}_{n+1}&=\frac{(\beta+1)^2-(\alpha-1)^2}{(2n+\alpha+\beta)(2n+\alpha+\beta+2)}.
\end{align}
Hence, the orthogonality measure for the polynomial $J^{QC}_{n}(x;-1)$ for $n\geq1$ as defined in \eqref{QCJOP_Sol4} is $d\mu(x)=(1-x)^{\alpha-1}(1+x)^{\beta+1}dx$, where $\alpha>-1, \beta>-1$.

\begin{remark}
	The representation \eqref{Jacobi interms extend parameter Jacobi_Sol3} can also be seen as a decomposition of the Jacobi polynomial with parameter $(\alpha,\beta)$ in terms of the Jacobi polynomial with extended parameters $(\alpha,\beta+1)$. This representation is known as the connection formula in the literature, see \rm\cite[page 53]{Paco Book_2021}.
\end{remark}
\begin{remark}
	Similarly, we can define the quasi-Christoffel Jacobi polynomial of order one at $a = 1$, denoted by $J^{QC}_n(x;1)$, and derive four distinct solutions to the nonlinear difference equation \eqref{gamma n restriction cond}. These solutions correspond to the four different methods of establishing the orthogonality of $J^{QC}_n(x;1)$.
\end{remark}
\begin{remark}\label{explanation-four-solution-QCJOP}
 The non-linear difference equation \eqref{gamma condition for Jacobi} can have four possible solutions that lead to the compact form of $J^{QC}_{n}(x;-1)$ defined in \eqref{quasi-Christoffel Jacobi poly.}. These polynomials are listed as solutions 1, 2, 3, and 4 in this subsection. The compact form of these polynomials also provides an explicit expression for the measure. It is expected that any other solution of \eqref{gamma condition for Jacobi} can be represented as a superposition of Christoffel and Geronimus transformations of these four solutions. For more details in this direction, see \cite{Zhedanov_RST_OP_JCAM_1997}.
\end{remark}


\subsubsection{\underline{Zeros of quasi-Christoffel Jacobi polynomials}}

Due to quasi-orthogonality of the polynomial $J^{QC}_{n}(x;-1)$ defined in \eqref{quasi-Christoffel Jacobi poly.}, it is possible for some zeros to lie outside the support of a Jacobi measure. Subsequently, we observe numerically that at most one zero lies outside the support of the measure for the Jacobi polynomials. Furthermore, we note that a zero can be situated on either side of the support of a measure, either to the left or right.
\begin{table}[ht]
	\begin{center}
		\resizebox{!}{1.6cm}{\begin{tabular}{|c|c|c|c|}
				\hline
				\multicolumn{2}{|c|}{Zeros of $J^{QC}_5(x;-1)$}&\multicolumn{2}{|c|}{Zeros of $J^{QC}_6(x;-1)$}\\
				\hline
				$\alpha=-0.5$, $\beta=0$, $\gamma_n=3$ & $\alpha=0$, $\beta=0.5$, $\gamma_n=2$& $\alpha=1$, $\beta=-0.5$, $\gamma_n=-1$ &$\alpha=0.5$, $\beta=1$, $\gamma_n=-2$\\
				\hline
				-1.23179&-2.09864&-0.88766&-0.73675\\
				\hline
				-0.60752&-0.62066&-0.57465&-0.34365\\
				\hline
				-0.00608&-0.04931&-0.12792&0.11967\\
				\hline
				0.59528&0.51835&0.35637&0.56019\\
				\hline
				0.95223&0.90244&0.77089&0.88114\\
				\hline
				-&-&1.24075&2.14008\\
				\hline
		\end{tabular}}
		\captionof{table}{Zeros of  $J^{QC}_n(x;-1)$ in \eqref{quasi-Christoffel Jacobi poly.}}
		\label{Zeros_Quasi Christoffel Jacobi_outside}
	\end{center}
\end{table}

Table \ref{Zeros_Quasi Christoffel Jacobi_outside} demonstrates that for $\alpha=-0.5$ and $\beta=0$ with $\gamma_{n}=3$, one zero $(x_0=-1.23179)$ of $J^{QC}_5(x;-1)$ lies outside the left side of the interval $(-1,1)$. Similarly, for $\alpha=0.5$ and $\beta=1$ with $\gamma_{n}=-2$, one zero $(x_0=2.14008)$ of $J^{QC}_5(x;-1)$ lies outside the right side of the interval $(-1,1)$. These instances are illustrated in Table \ref{Zeros_Quasi Christoffel Jacobi_outside} for various values of $\alpha$, $\beta$, and $\gamma_n$. In fact, this behavior of zeros also holds for the general quasi-Christoffel polynomial of order one when the point $a$ lies strictly left to  the interval of orthogonality. Subsequently, using the technique discussed in \cite{Joulak_Quasi_orthogonal_2005_ANM}, we prove the following:
\begin{proposition}\label{quasi-Chris_zerooutside_right_IOO}
	Let $\{\mathcal{C}_n(x;a)\}$ be a sequence of orthogonal polynomials with respect to the positive definite linear functional $\mathcal{L}^C$. The necessary and sufficient condition for exactly one zero of $\mathcal{C}^Q_n(x;a)$, defined in \eqref{Quasi-Christoffel order one}, to lie to the right of the interval of orthogonality $(c,d)$ is that:
	\begin{align}\label{cond-quasi-Chris_zerooutside_right_IOO}
		\gamma_n<-\frac{\mathcal{C}_n(d;a)}{\mathcal{C}_{n-1}(d;a)}<0.
	\end{align}
\end{proposition}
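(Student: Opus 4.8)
The plan is to reduce the geometric statement about the location of the exterior zero to a single sign condition on $\mathcal{C}_n^Q(d;a)$, and then to unwind that sign condition into the stated inequality on $\gamma_n$. First I would record the consequences of positive definiteness of $\mathcal{L}^C$: the polynomials $\mathcal{C}_n(x;a)$ have $n$ real simple zeros, all lying in $(c,d)$. Since each $\mathcal{C}_n(x;a)$ is monic and every zero is strictly below $d$, evaluating to the right of all zeros gives $\mathcal{C}_n(d;a)>0$ and $\mathcal{C}_{n-1}(d;a)>0$. In particular $-\mathcal{C}_n(d;a)/\mathcal{C}_{n-1}(d;a)<0$, so the right-hand inequality in \eqref{cond-quasi-Chris_zerooutside_right_IOO} holds automatically; the real content of the proposition is the left-hand inequality.

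Next I would invoke that $\mathcal{C}_n^Q(x;a)=\mathcal{C}_n(x;a)+\gamma_n\mathcal{C}_{n-1}(x;a)$, the $n$-indexed form of \eqref{Quasi-Christoffel order one}, is a monic quasi-orthogonal polynomial of order one with respect to $\mathcal{L}^C$ (Theorem \ref{quasi-type kernel}). By the classical theory of quasi-orthogonal polynomials of order one (Shohat, and the technique of Joulak cited above), $\mathcal{C}_n^Q(x;a)$ has $n$ real zeros, at least $n-1$ of which lie in $(c,d)$; consequently at most one zero lies outside $(c,d)$.

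The heart of the argument is the equivalence: exactly one zero of $\mathcal{C}_n^Q(x;a)$ lies to the right of $d$ if and only if $\mathcal{C}_n^Q(d;a)<0$. For the backward direction, if $\mathcal{C}_n^Q(d;a)<0$ then, because $\mathcal{C}_n^Q(x;a)\to+\infty$ as $x\to+\infty$ (monic), the intermediate value theorem produces a zero in $(d,+\infty)$; since at most one zero lies outside $(c,d)$, this is the unique exterior zero, it lies to the right, and none lies to the left of $c$. For the forward direction, if exactly one zero $\xi$ lies to the right, then $\xi>d$ while the remaining $n-1$ zeros lie in $(c,d)$ and hence are $<d$; thus $d$ belongs to the open interval between the largest interior zero and $\xi$, on which $\mathcal{C}_n^Q$ has no zero and therefore constant sign, and since $\xi$ is a simple zero with $\mathcal{C}_n^Q>0$ beyond it, that sign is negative, giving $\mathcal{C}_n^Q(d;a)<0$. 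Finally I would translate the sign condition: substituting $x=d$ into \eqref{Quasi-Christoffel order one} gives $\mathcal{C}_n^Q(d;a)=\mathcal{C}_n(d;a)+\gamma_n\mathcal{C}_{n-1}(d;a)$, and dividing by $\mathcal{C}_{n-1}(d;a)>0$ shows $\mathcal{C}_n^Q(d;a)<0$ is equivalent to $\gamma_n<-\mathcal{C}_n(d;a)/\mathcal{C}_{n-1}(d;a)$, which combined with the automatic right inequality is precisely \eqref{cond-quasi-Chris_zerooutside_right_IOO}.

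The main obstacle is the structural input that a quasi-orthogonal polynomial of order one carries at least $n-1$ simple zeros inside $(c,d)$ and at most one outside; everything else is elementary sign-chasing once this is in hand. A secondary delicate point is the borderline case $\mathcal{C}_n^Q(d;a)=0$, which must be excluded so that ``to the right'' genuinely means $>d$ rather than a zero sitting at the endpoint. Here the simplicity of the zeros, guaranteed by quasi-orthogonality, is exactly what rules out a zero located precisely at $d$ or a tangency there, so that $\mathcal{C}_n^Q(d;a)<0$ is the correct (strict) threshold.
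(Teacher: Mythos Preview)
Your proof is correct and follows the same route as the paper's: both reduce the claim to the sign condition $\mathcal{C}_n^Q(d;a)<0$, argue via monicity and the intermediate value theorem, and then divide by $\mathcal{C}_{n-1}(d;a)>0$ to obtain the inequality on $\gamma_n$; you are in fact more careful than the paper in explicitly invoking the quasi-orthogonality bound to upgrade ``at least one'' to ``exactly one'' exterior zero. One minor correction to your closing remark: simplicity of the zeros does not by itself prevent a zero from sitting exactly at $d$---what excludes that borderline is simply that $\mathcal{C}_n^Q(d;a)=0$ corresponds to equality $\gamma_n=-\mathcal{C}_n(d;a)/\mathcal{C}_{n-1}(d;a)$, which the strict inequality in \eqref{cond-quasi-Chris_zerooutside_right_IOO} already rules out.
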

\begin{proof}
	Suppose $z_1,z_2,...,z_n$, with $z_1<z_2<...<z_{n-1}<z_n$,  vanish the polynomial $\mathcal{C}^Q_n(x;a)$ of degree $n$. If $d<z_n$, then, using the fact that $z_n$ is the largest zero of $\mathcal{C}^Q_n(x;a)$, we obtain $\mathcal{C}^Q_n(d;a)<0\implies \gamma_{n}< -\frac{\mathcal{C}_n(d;a)}{\mathcal{C}_{n-1}(d;a)}$. Since $\mathcal{C}_n(d;a)$ and $\mathcal{C}_{n-1}(d;a)$ share the same positive sign, we get \eqref{cond-quasi-Chris_zerooutside_right_IOO}. Conversely, if \eqref{cond-quasi-Chris_zerooutside_right_IOO} holds, then the positivity of $\mathcal{C}^Q_n(x;a)$ for the large values of $x>d$ ensures that there exists $z_n>d$ at which $\mathcal{C}^Q_n(x;a)$ vanishes.
\end{proof}
\begin{remark}
	In a similar manner to Proposition \ref{quasi-Chris_zerooutside_right_IOO}, we see that the condition on $\gamma_n$, given by $	\gamma_n>-\frac{\mathcal{C}_n(c;a)}{\mathcal{C}_{n-1}(c;a)}>0$ is necessary and sufficient for obtaining one zero of $\mathcal{C}^Q_n(x;a)$  that lies to the left of the interval of orthogonality.
\end{remark}


  \begin{table}[ht]
	\begin{center}
		\resizebox{!}{2cm}{\begin{tabular}{|c|c|c|c|}
				\hline
				\multicolumn{2}{|c|}{Zeros of $J^{QC}_n(x;-1)$}&\multicolumn{2}{|c|}{Zeros of $J^{QC}_n(x;-1)$}\\
				\hline
				$n=7$,	$\alpha=0.1$, $\beta=-0.4$ &$n=8$, $\alpha=0.1$, $\beta=-0.4$&$n=9$, $\alpha=1.3$, $\beta=0.4$ &$n=10$, $\alpha=1.3$, $\beta=0.4$\\
				\hline
				-0.901465&-0.923446&-0.911302&-0.926224\\
				\hline
			-0.639281&-0.716709&-0.73988&-0.782531\\
				\hline
				-0.261342&-0.409266&-0.500104&-0.578782\\
				\hline
			0.164331&-0.0441403&-0.213879&-0.330473\\
				\hline
				0.561137&0.327586&0.0926373&-0.0565031\\
				\hline
				0.857643&0.653905&0.391443&0.22228\\
				\hline
				1&0.88914&0.655282&0.484672\\
				\hline
				-&1&0.860322&0.710749\\
				\hline
				-&-&1&0.88354\\
				\hline
				-&-&-&1\\
				\hline
				\end{tabular}}
		\captionof{table}{Zeros of  $J^{QC}_n(x;-1)$ with $\gamma_{n}$ given in \eqref{Explicit gamma value for QCJP_Sol1} }
		\label{Zeros_Quasi Christoffel Orthogonal Jacobi}
	\end{center}
\end{table}

Next, we observe that using the value of $\gamma_{n}$ mentioned in \eqref{Explicit gamma value for QCJP_Sol1} yields the orthogonality of the polynomial $J^{QC}_n(x;-1)$ defined in \eqref{quasi-Christoffel Jacobi poly.}. The behavior of the zeros of $J^{QC}_n(x;-1)$ is illustrated in Table \ref{Zeros_Quasi Christoffel Orthogonal Jacobi}. For each $n\geq1$, it is also noted that $p(x)=x-1$ is a factor of the polynomial $J^{QC}_n(x;-1)$. This implies that exactly one zero, $x=1$, lies on the boundary of the true interval of orthogonality for Jacobi polynomials.

\begin{figure}[!ht]
	\includegraphics[scale=0.7]{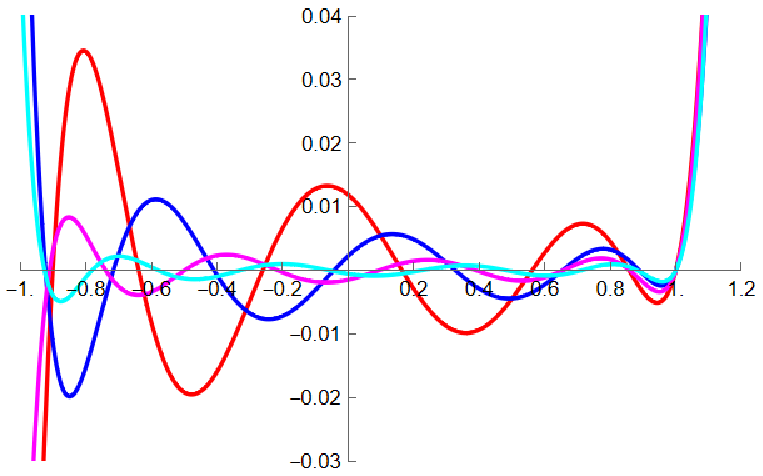}
	\caption{Graph of $J^{QC}_{n}(x;-1)$  with $\gamma_{n}$ as \eqref{Explicit gamma value for QCJP_Sol1} and $(n,\alpha,\beta)$: (7,0.1,-0.4)(Red), (8,0.1,-0.4) (blue), (9,1.3,0.4) (Magenta) and (10,1.3,0.4) (Cyan).}
	\label{Interlace_Zeros_quasi-Christoffel orthogonal Jacobi}
\end{figure}

In Table \ref{Zeros_Quasi Christoffel Orthogonal Jacobi} and Figure \ref{Interlace_Zeros_quasi-Christoffel orthogonal Jacobi}, it is observed that for a particular set of $\alpha$ and $\beta$ values, such as $\alpha=1.3$ and $\beta=0.4$, the zeros of $J^{QC}_{9}(x;-1)$ and $J^{QC}_{10}(x;-1)$ exhibit interlacing behaviour. Additionally, Table \ref{Zeros_Quasi Christoffel Orthogonal Jacobi} illustrates that exactly one zero lies on the boundary of the interval $(-1,1)$ while all others lie within the interval.
The zeros of $\mathcal{P}_n^{(\alpha,\beta)}(x)$, $\mathcal{C}_{n}(x;-1)$, and $J^{QC}_{n}(x;-1)$, with $\gamma_{n}$ given in \eqref{Explicit gamma value for QCJP_Sol1}, exhibit an interlacing pattern. We verified this triple interlacing for various values of $n, \alpha$, and $\beta$. Specifically, we demonstrated the triple interlacing for $n=7, \alpha=1.3$, and $\beta=-0.6$ in Figure \ref{n7 Interlace_Zeros_Jacobi_CJ_QCJOP} and Table \ref{n7 Zeros_Jacobi_Christoffel Jacobi_Quasi Christoffel Jacobi}. For $n=8, \alpha=1.3$, and $\beta=-0.6$, this pattern is shown in Figure \ref{n8 Interlace_Zeros_Jacobi_CJ_QCJOP} and Table \ref{n8 Zeros_Jacobi_Christoffel Jacobi_Quasi Christoffel Jacobi}.

\noindent\begin{minipage}{1.0\linewidth}
	\hspace{0cm}	
	\noindent	\begin{minipage}{0.5\linewidth}
		\begin{figure}[H]
			\includegraphics[width=\linewidth]{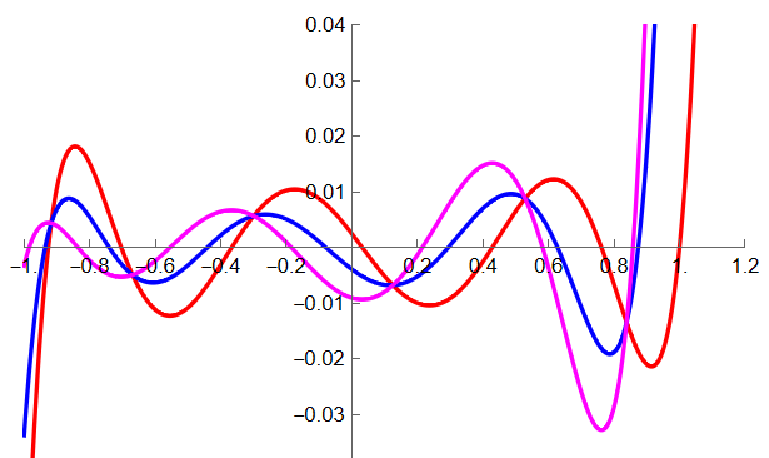}
			\captionof{figure}{Graph of $\mathcal{P}_7^{(1.3,-0.6)}(x)$ (Magenta), $\mathcal{C}_7(x;-1)$ (Blue) and  $J^{QC}_7(x;-1)$(red) with $\gamma_{n}$ given in \eqref{Explicit gamma value for QCJP_Sol1} .}
			\label{n7 Interlace_Zeros_Jacobi_CJ_QCJOP}
		\end{figure}
	\end{minipage}
	\noindent	\begin{minipage}{0.40\linewidth}
		\begin{table}[H]
			\resizebox{!}{1.6cm}{\begin{tabular}{|c|c|c|}
					\hline
					\multicolumn{1}{|c|}{Zeros of $\mathcal{P}^{(\alpha,\beta)}_n(x)$}&	\multicolumn{1}{|c|}{Zeros of $\mathcal{C}_n(x;-1)$}&\multicolumn{1}{|c|}{Zeros of $J^{QC}_n(x;-1)$}\\
					\hline
					\multicolumn{3}{|c|}{
						$n=7$,	$\alpha=1.3$, $\beta=-0.6$ }\\
					\hline
					-0.98451&-0.935875&-0.926421\\
					\hline
					-0.836149&-0.740835&-0.703976\\
					\hline
					-0.554702&-0.441406&-0.36693\\
					\hline
					-0.184727&-0.0794614&0.0315102\\
					\hline
					0.2153&0.294368&0.428496\\
					\hline
					0.582157&0.627786&0.761811\\
					\hline
					0.857868&0.874149&1\\
					\hline
			\end{tabular}}
			\captionof{table}{Zeros of\\  $J^{QC}_7(x;-1)$ with $\gamma_{n}$ given in \eqref{Explicit gamma value for QCJP_Sol1} }
			\label{n7 Zeros_Jacobi_Christoffel Jacobi_Quasi Christoffel Jacobi}
		\end{table}
	\end{minipage}
\end{minipage}


For $\alpha>-1$ and $\beta>-1$, the zeros of the Jacobi polynomials $\mathcal{P}_n^{(\alpha,\beta)}(x)$ are located inside the interval $(-1,1)$ and are simple. However, when we extend the values of the parameters $(\alpha,\beta)$, this result no longer holds and also the Jacobi polynomials no longer maintain orthogonality.  Nonetheless, by discarding some initial terms of the Jacobi polynomial sequence, orthogonality can still be achieved. Specifically, for $\alpha=-1$ and $\beta>-1$, $x=1$ becomes the common zero of all the polynomials $\mathcal{P}_n^{(-1,\beta)}(x)$ and to achieve the standard orthogonality of Jacobi polynomials, it is necessary to eliminate the first term of the Jacobi sequence. Thus, the sequence $\{\mathcal{P}_n^{(-1,\beta)}(x)\}_{n=1}^{\infty}$ becomes orthogonal under the Jacobi measure \cite{Littlejohn_2012_Jacobi_Sobolev}. More generally, for the parameters $\alpha=-k$ where $k\in \mathbb{N}$ and $\beta>-1$, orthogonality of the Jacobi polynomials $\{\mathcal{P}_n^{(-k,\beta)}(x)\}_{n=k}^{\infty}$ can be achieved by eliminating the first $k$ terms of the Jacobi sequence. This can be seen from the following formula (see \cite[equation 4.22.2]{Szego}):
\begin{align}\label{Jacobi relation negative and positive parameter}
	\mathcal{P}_n^{(-k,\beta)}(x)=\frac{1}{2^k}\frac{\Gamma(n+\beta+1)\Gamma(n-k+1)}{\Gamma(n+\beta+1-k)\Gamma(n+1)}(x-1)^k\mathcal{P}_{n-k}^{(k,\beta)}(x).
\end{align}
When seeking orthogonality of Jacobi polynomials for negative integral values of $\alpha$, utilizing \eqref{Jacobi relation negative and positive parameter}, we encounter a multiplicity $m$ for the zero $x=1$. However, the polynomial obtained by restoring orthogonality as described in equation \eqref{quasi-Christoffel Jacobi orthogonal} has the common zero $x=1$ with multiplicity one, which remains independent of the parameters $\alpha$ and $\beta$. The independence from the parameters $\alpha$ and $\beta$ of the factor $p(x)=x-1$ is also evident in table \ref{Zeros_Quasi Christoffel Orthogonal Jacobi} and \ref{n8 Zeros_Jacobi_Christoffel Jacobi_Quasi Christoffel Jacobi}.

\begin{minipage}{1.0\linewidth}
	\noindent	\begin{minipage}{0.5\linewidth}
		\begin{figure}[H]
			\includegraphics[width=\linewidth]{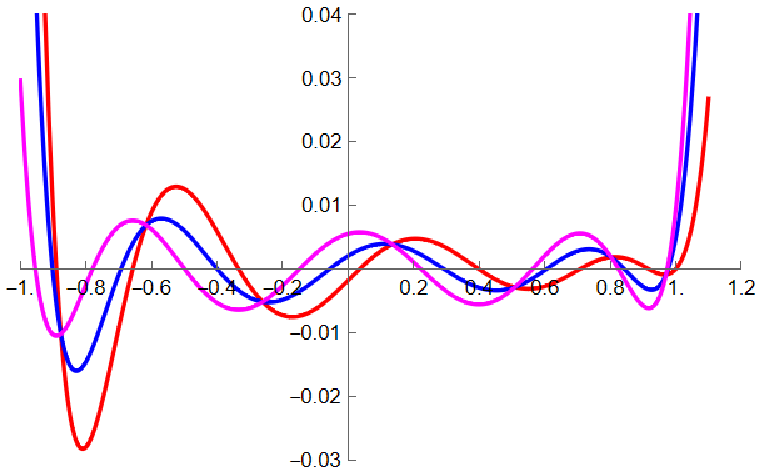}
			\captionof{figure}{Graph of $\mathcal{P}_8^{(-0.3,0.1)}(x)$ (Magenta), $\mathcal{C}_8(x;-1)$ (Blue) and  $J^{QC}_8(x;-1)$(red) with $\gamma_{n}$ given in \eqref{Explicit gamma value for QCJP_Sol1}}.
			\label{n8 Interlace_Zeros_Jacobi_CJ_QCJOP}
		\end{figure}
	\end{minipage}
	\noindent	\begin{minipage}{0.40\linewidth}
		\begin{table}[H]
			\resizebox{!}{1.7cm}{\begin{tabular}{|c|c|c|}
					\hline
					\multicolumn{1}{|c|}{Zeros of $\mathcal{P}^{(\alpha,\beta)}_n(x)$}&	\multicolumn{1}{|c|}{Zeros of $\mathcal{C}_n(x;-1)$}&\multicolumn{1}{|c|}{Zeros of $J^{QC}_n(x;-1)$}\\
					\hline
					\multicolumn{3}{|c|}{
						$n=8$,	$\alpha=-0.3$, $\beta=0.1$ }\\
					\hline
					-0.954048&-0.902299&-0.890383\\
					\hline
					-0.780467&-0.693404&-0.657609\\
					\hline
					-0.499018&-0.39941&-0.334239\\
					\hline
					-0.148551&-0.0565928&0.0349937\\
					\hline
					0.22249&0.29277&0.399036\\
					\hline
					0.562812&0.605597&0.707552\\
					\hline
					0.825373&0.84332&0.917865\\
					\hline
					0.973941&0.976685&1\\
					\hline
			\end{tabular}}
			\captionof{table}{Zeros of\\  $J^{QC}_8(x;-1)$ with $\gamma_{n}$ given in \eqref{Explicit gamma value for QCJP_Sol1} }
			\label{n8 Zeros_Jacobi_Christoffel Jacobi_Quasi Christoffel Jacobi}
		\end{table}
	\end{minipage}
\end{minipage}
\begin{remark}
It may be noted that the orthogonality of $J^{QC}_{n}(x;-1)$ achieved in solutions 2, 3, and 4 is again similar to the family of Jacobi orthogonal polynomials with varying parameters, whose numerical illustrations are abundant in the literature (see \cite{Driver_Jordaan_Mbuyi_Zeros-Jacobi-differentparam-numalg_2008,Driver_Littlejohn_zeros_Jacobi-2023}). Hence, we have done the analysis of zeros for the polynomials \eqref{compactform_QCJOP_Sol1} obtained in solution 1, but not for solutions 2, 3, and 4 in Subsection \ref{Sol1-QCJP}.
\end{remark}

\subsection{Quasi-Christoffel Laguerre polynomial} The monic Laguerre polynomials are characterized by the following three-term recurrence relation \cite[page 154]{Chihara book}:
\begin{align}\label{Laguerre_TTRR}
	\mathcal{L}^{(\alpha)}_{n+1}(x)=(x-c_{n+1})\mathcal{L}^{(\alpha)}_{n}(x)-\lambda_{n+1}\mathcal{L}^{(\alpha)}_{n-1}(x),
\end{align}
with initial data $\mathcal{L}^{(\alpha)}_{-1}(x)=0,     \mathcal{L}^{(\alpha)}_{0}(x)=1$. The recurrence parameters, denoted by $c_{n+1}$ and $\lambda_{n+1}$, are expressed as $c_{n+1}=2n+\alpha+1$ and $\lambda_{n+1}=n(n+\alpha)$. These Laguerre polynomials exhibit orthogonality within the interval $(0,\infty)$ concerning the weight function $w(x;\alpha)=x^{\alpha}e^{-x}$, where $\alpha>-1$. Upon applying the Christoffel transformation to the Laguerre weight with $a=0$, the resulting transformed weight is $\tilde{w}(x;\alpha)=x^{\alpha+1}e^{-x}$, $\alpha>-2$. Consequently, the Christoffel Laguerre polynomials at $a=0$ assume the form of the Laguerre polynomial with parameter $\alpha+1$.  The monic Christoffel Laguerre polynomials at $a=0$, denoted by $\mathcal{C}_n(x;0):=\mathcal{L}^{(\alpha+1)}_{n}(x)$, are generated by the three-term recurrence relation
\begin{align}
	\mathcal{L}^{(\alpha+1)}_{n+1}(x)=(x-c^c_{n+1})\mathcal{L}^{(\alpha+1)}_{n}(x)-\lambda^c_{n+1}\mathcal{L}^{(\alpha+1)}_{n-1}(x),
\end{align}
with initial conditions  $\mathcal{L}^{(\alpha+1)}_{-1}(x)=0, \mathcal{L}^{(\alpha+1)}_{0}(x)=1$. The recurrence coefficients are  $c^c_{n+1}=2n+\alpha+2$ and $\lambda^c_{n+1}=n(n+\alpha+1)$.
 \newline
The monic quasi-Christoffel Laguerre polynomial of order one is given by
\begin{align}\label{quasi-type kernel Laguerre}
	L^{QC}_n(x;0)=\mathcal{L}^{(\alpha+1)}_{n}(x)+\gamma_n\mathcal{L}^{(\alpha+1)}_{n-1}(x).
\end{align}

\subsubsection{\underline{Orthogonality of quasi-Christoffel Laguerre polynomials, $L^{QC}_n(x;0)$}} \label{Sol_QCLP} To ensure the orthogonality of $L^{QC}_n(x;0)$, the condition \eqref{gamma n restriction cond} must be satisfied, which gives
\begin{align}\label{Laguerre gamma n restriction cond}
	(2+\gamma_j-\gamma_{j+1})+\frac{1}{\gamma_{j-1}}(j-1)(j+\alpha)-\frac{1}{\gamma_j}j(j+\alpha+1)=0.
\end{align}
Taking sum over $j=2$ to $n+1$, the equation is expressed as follows:
\begin{align}
	(2n+\gamma_2-\gamma_{n+2})+\frac{1}{\gamma_{1}}(\alpha+2)-\frac{1}{\gamma_{n+1}}{(n+1)}(n+\alpha+2)=0.
\end{align}
We provide two possible solutions to \eqref{Laguerre gamma n restriction cond}, which are as follows:

\textbf{Solution 1.}
By choosing $\gamma_{1}=\alpha+2$ and $\gamma_2=\alpha+3$, we recursively determine $\gamma_{n}=n+\alpha+1$. As a result, the polynomial $L^{QC}_n(x;0)$ is given by
\begin{align}\label{quasi-type kernel Laguerre orthogonal polynomial}
	L^{QC}_n(x;0)=\mathcal{L}^{(\alpha+1)}_{n}(x)+(n+\alpha+1)\mathcal{L}^{(\alpha+1)}_{n-1}(x),
\end{align}
which satisfies the three-term recurrence relation
\begin{align*}
	L^{QC}_{n+1}(x;0)=(x-c^{qc}_{n+1})L^{QC}_n(x;0)-\lambda^{qc}_{n+1}L^{QC}_{n-1}(x;0),
\end{align*} with recurrence coefficients given by
\begin{align}\label{recurrence coef quasi Christoffel Laguerre}
	\lambda^{qc}_{n+1}	=\frac{\gamma_{n}}{\gamma_{n-1}}\lambda_{n}^c=(n-1)(n+\alpha+1),~ c^{qc}_{n+1}=c_{n+1}^c+\gamma_{n}-\gamma_{n+1}=2n+\alpha+1.
\end{align}
If we put $\alpha=-1$ into equation \eqref{quasi-type kernel Laguerre orthogonal polynomial}, then the polynomial $L^{QC}_n(x;0)$ coincides with the Laguerre polynomial of degree $n$ with parameter $\alpha=-1$, i.e.,
\begin{align}
	\mathcal{L}^{(-1)}_{n}(x)=\mathcal{L}^{(0)}_{n}(x)+n\mathcal{L}^{(0)}_{n-1}(x).
\end{align}
By setting $\gamma_n=n+\alpha+1$, we ensure the orthogonality of the monic polynomial  $L^{QC}_n(x;0)$. Consequently, the constant term in the polynomial $L^{QC}_{n}(x;0)$ vanishes, implying that $p(x)=x$ is a factor of the polynomial $L^{QC}_{n}(x;0)$ for each degree $n\geq1$. We can also combine the right side of \eqref{quasi-type kernel Laguerre orthogonal polynomial} to obtain the compact form of $L^{QC}_{n}(x;0)$ in terms of the Laguerre polynomials with different parameter. Hence, \eqref{quasi-type kernel Laguerre orthogonal polynomial} can be written as:
\begin{align}\label{compactform_QCLOP_Sol1}
	L^{QC}_n(x;0)=x\mathcal{L}^{(\alpha+2)}_{n-1}(x)=\mathcal{L}^{(\alpha+1)}_{n}(x)+(n+\alpha+1)\mathcal{L}^{(\alpha+1)}_{n-1}(x).
\end{align}
This tells that the polynomials $L^{QC}_n(x;0)$ for $n\geq1$ defined in \eqref{quasi-type kernel Laguerre} with $\gamma_n=n+\alpha+1$ becomes orthogonal with respect to the measure $d\mu=x^{\alpha}e^{-x}$ for $\alpha>-1$.

\begin{remark}
Note that this connection formula can also be derived by initially applying the Christoffel transformation to Laguerre polynomials $\mathcal{L}_n^{\alpha}, \alpha > -1$, followed by the Geronimus transformation on the Christoffel-transformed Laguerre polynomials. For more details, we refer \cite[page 59]{Paco Book_2021}.
\end{remark}

\begin{remark}
	The recurrence coefficients $\lambda^{qc}_{n+1}$ and $c^{qc}_{n+1}$ defined in \eqref{recurrence coef quasi Christoffel Laguerre}, which are essential for ensuring the orthogonality of $L^{QC}_n(x;0)$ given in \eqref{quasi-type kernel Laguerre orthogonal polynomial}, are related using the recurrence parameters of the source Laguerre polynomials as outlined in \eqref{Laguerre_TTRR}. The obtained relation is given by
	\begin{align*}
		\lambda_{n+1}=\lambda^{qc}_{n+1}+c^{qc}_{n+1}.
	\end{align*}
\end{remark}

\textbf{Solution 2.} For  $\alpha>-1$, another solution to equation \eqref{Laguerre gamma n restriction cond}  is $\gamma_n=n$. Thus, the sequence ${L^{QC}_n(x;0)}$ for $n\geq0$ forms an orthogonal polynomial sequence. With $\gamma_n=n$, we recover the Laguerre polynomial with parameter $\alpha$. Specifically,
\begin{align}\label{Laguerre interms quasiChrstoffel}
	L^{QC}_n(x;0):=\mathcal{L}^{(\alpha)}_{n}(x)=\mathcal{L}^{(\alpha+1)}_{n}(x)+n\mathcal{L}^{(\alpha+1)}_{n-1}(x),
\end{align}
which becomes an orthogonal polynomial with respect to the measure $d\mu=x^{\alpha}e^{-x}dx$ and the recurrence parameters are given by
\begin{align*}
	\lambda^{qc}_{n+1}	=n(n+\alpha),~ c^{qc}_{n+1}=2n+\alpha+1.
\end{align*}
\begin{remark}
	In equation \eqref{Laguerre interms quasiChrstoffel}, it is observed that $L^{QC}_n(x;0)$ provides a transition into Laguerre polynomials with parameter  $\alpha$. This similarity corresponds to the decomposition of Laguerre polynomials as depicted in \cite[page 102]{Szego}.
\end{remark}
\begin{remark}
	It is noted that there is only one finite point at the endpoint of the interval of orthogonality for Laguerre polynomials. As a result, only two possible solutions of the non-linear difference equation \eqref{Laguerre gamma n restriction cond} are obtained, leading to the compact structure of the  polynomial $L^{QC}_n(x;0)$ defined in \eqref{quasi-type kernel Laguerre} and their corresponding measures. These polynomials are listed in Subsection \ref{Sol_QCLP} as solutions 1 and 2. Zhedanov's result \cite[Proposition 1]{Zhedanov_RST_OP_JCAM_1997} indicates that any linear spectral transformation can be written as a superposition of Christoffel and Geronimus transformations. Hence, it is expected that any other solution of \eqref{Laguerre gamma n restriction cond} also leads to a measure, which is the superposition of Christoffel and Geronimus transformations of the measures obtained in the two solutions given in this subsection.
\end{remark}

\subsubsection{\underline{Zeros of quasi-Christoffel Laguerre polynomials}}
Orthogonality fails for the polynomial $L^{QC}_n(x;0)$ at the first instance. This results in at most one zero of the polynomial $L^{QC}_{n}(x;0)$ defined in \eqref{quasi-type kernel Laguerre} lying outside the support of the measure for the Laguerre polynomials. Table \ref{Zeros_Quasi Chrsitoffel Laguerre} presents the behavior of zeros of $L^{QC}_{n}(x;0)$. For $\alpha=0$ and $\gamma_{n}=7$, we observe that one zero $(x_0=-0.404714)$ of $L^{QC}_{n}(x;0)$ lies outside the support of the measure for the  Laguerre polynomials, while all other zeros are positive. Similarly, for $\alpha=1.5$ and $\gamma_{n}=9$, Table \ref{Zeros_Quasi Chrsitoffel Laguerre} shows that at most one negative zero of the polynomial $L^{QC}_{n}(x;0)$ exists.

\begin{table}[ht]
	\begin{center}
		\resizebox{!}{2.0cm}{\begin{tabular}{|c|c|}
				\hline
				\multicolumn{2}{|c|}{Zeros of $L^{QC}_{n}(x;0)$}\\
				\hline
				$n=5$, $\alpha=0$, $\gamma_{n}=7$ &$n=6$, $\alpha=1.5$, $\gamma_{n}=9$\\
				\hline
				-0.407194&-0.219116\\
				\hline
				1.08691&1.67954\\
				\hline
				3.2637&3.90364\\
				\hline
				6.75121&7.07314\\
				\hline
				12.3054&11.5115\\
				\hline
				-&18.0513\\
				\hline
		\end{tabular}}
		\captionof{table}{Zeros of $L^{QC}_{n}(x;0)$}
		\label{Zeros_Quasi Chrsitoffel Laguerre}
	\end{center}
\end{table}

 After achieving the orthogonality of $L^{QC}_{n}(x;0)$ with $\gamma_n$ given in solution 1 of Subsection \eqref{Sol_QCLP}, we observe that one zero of $L^{QC}_{n}(x;0)$, as given in \eqref{compactform_QCLOP_Sol1}, lies on the boundary of the support of the measure. This is illustrated in Table \ref{Interlacing of quasi-Christoffel orthogonal Laguerre}, while all other zeros lie within the interval $(0,\infty)$.
  In Figure \ref{Interlace_Zeros_quasi-Christoffel orthogonal Laguerre}, for $\alpha=-0.5$, we observe the interlacing of zeros between $L^{QC}_5(x;0)$ and $L^{QC}_6(x;0)$. Additionally, Figure \ref{Interlace_Zeros_Laguerre_quasi-Christoffel orthogonal Laguerre} illustrates the interlacing between the zeros of $L^{QC}_5(x;0)$ and $\mathcal{L}^{(-0.5)}_5(x)$. Similarly, interlacing between the Christoffel transformed Laguerre  polynomial and $L^{QC}_n(x;0)$ is also demonstrated in Figure \ref{Interlace_Zeros_Christoffel Laguerre_quasi-Christoffel orthogonal Laguerre}.
  \vspace{0.1cm}

\noindent\begin{minipage}[c]{0.5\textwidth}
 \resizebox{!}{1.4cm}{\begin{tabular}{|c|c|}
     \hline  \multicolumn{2}{|c|}{Interlacing of $L^{QC}_{n}(x;0)$ and $L^{QC}_{n+1}(x;0)$}\\
				\hline
				$\alpha=-0.5$, $n=5$,
				 $\gamma_{n}=n+\alpha+1$  &$\alpha=-0.5$, $n=6$,  $\gamma_{n}=n+\alpha+1$\\
				\hline
				0.0&0.0\\
				\hline
				0.978507&0.817632\\
				\hline
				2.99038&2.47233\\
				\hline
				6.3193&5.11601\\
				\hline
				11.7118&9.04415\\
				\hline
				\hyp{}&15.0499\\
				\hline
		\end{tabular}}
	\captionsetup{type=table}
		\captionof{table}{Zeros of $L^{QC}_{n}(x;0)$}
		\label{Interlacing of quasi-Christoffel orthogonal Laguerre}
	\end{minipage}
\begin{minipage}[c]{0.5\textwidth}
	\hspace{0.7cm}\resizebox{!}{1.4cm}{\begin{tabular}{|c|c|c|}
			\hline
			\multicolumn{1}{|c|}{Zeros of $\mathcal{L}^{\alpha}_{n}(x)$}&\multicolumn{1}{|c|}{Zeros of $\mathcal{C}_{n}(x;0)$}&\multicolumn{1}{|c|}{Zeros of $L^{QC}_n(x;0)$}\\
			\hline
			$\alpha=-0.5$, $n=5$  &$\alpha=-0.5$, $n=5$ & $\alpha=2$, $n=5$\\
			\hline
			0.117581&0.431399&0.0\\
			\hline
			1.07456&1.75975&2.31916\\
			\hline
			3.08594&4.10447&5.12867\\
			\hline
			6.41473&7.7467&9.20089\\
			\hline
			11.8072&13.4577&15.3513\\
			\hline
			-&-&-\\
			\hline
				\end{tabular}}
\captionsetup{type=table}
\captionof{table}{Zeros of $\mathcal{L}^{\alpha}_n(x)$ }
\label{Zeros of quasi-Christoffel orthogonal Laguerre}
\end{minipage}
\begin{figure}[!ht]
	\includegraphics[scale=0.9]{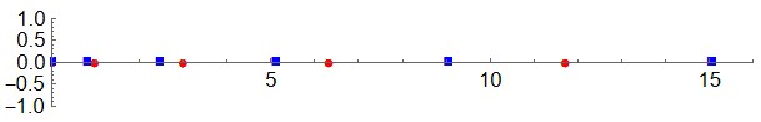}
	\caption{Zeros of $L^{QC}_{5}(x;0)$ (blue squares) and $L^{QC}_{6}(x;0)$ (red circles).}
	\label{Interlace_Zeros_quasi-Christoffel orthogonal Laguerre}
\end{figure}
\begin{figure}[!ht]
	\includegraphics[scale=0.9]{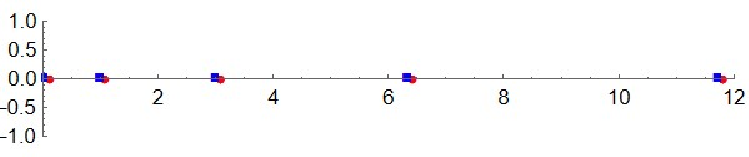}
	\caption{Zeros of $L^{QC}_{5}(x;0)$ (blue squares) and $\mathcal{L}^{(-0.5)}_{5}(x)$ (red circles).}
	\label{Interlace_Zeros_Laguerre_quasi-Christoffel orthogonal Laguerre}
\end{figure}
\begin{figure}[!ht]
	\includegraphics[scale=0.9]{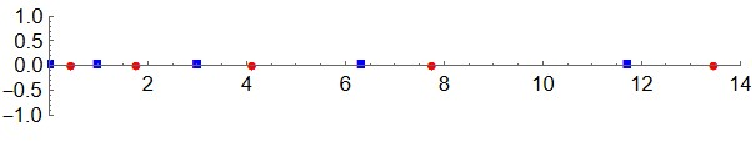}
	\caption{Zeros of $L^{QC}_{5}(x;0)$ (blue squares) and $\mathcal{C}_{5}(x;0)$ (red circles).}
	\label{Interlace_Zeros_Christoffel Laguerre_quasi-Christoffel orthogonal Laguerre}
\end{figure}

It is worth noting that the sequence of Laguerre polynomials $\{\mathcal{L}_n^{(\alpha)}(x)\}_{n=0}^{\infty}$ becomes classically orthogonal when $\alpha>-1$. However, substituting $\alpha=-1$ breaks this classical orthogonality condition, necessitating orthogonality in the non-classical sense, such as Sobolev orthogonality.  The tail-end sequence of Laguerre polynomials $\{\mathcal{L}_n^{(-1)}(x)\}_{n=1}^{\infty}$ becomes orthogonal with respect to the usual inner product. The more general framework of the orthogonality of sequence of Laguerre polynomials $\{\mathcal{L}_n^{(\alpha)}(x)\}_{n=\alpha}^{\infty}, \alpha=-m, m\in \mathbb{N}$ is discussed in \cite{Everitt_Littlejohn_Wellman_Sobolev orth_Laguerre_JCAM_2004}.  Further exploration of Laguerre polynomial orthogonality in the non-classical sense can be found in \cite{Everitt_Littlejohn_Wellman_Sobolev orth_Laguerre_JCAM_2004,Hajmirzaahmad_Laguerre for alpha -1}.

To extend the applicability of Laguerre polynomials to negative integral values of $\alpha$, i.e., $\alpha=-m, m\in \mathbb{N}$, we can utilize the following formula (see \cite[equation (5.2.1)]{Szego}):
\begin{align}\label{Laguerre neg-pos parameter relation}
	\mathcal{L}_n^{(-m)}(x)=(-1)^mx^m\frac{\Gamma(n-m+1)}{\Gamma(n+1)}\mathcal{L}_{n-m}^{(m)}(x), ~~~\text{for}~~m\leq n.
\end{align}
When $m=1$ in \eqref{Laguerre neg-pos parameter relation}, we observe that $x=0$ becomes a common zero of the polynomial $\mathcal{L}_n^{(-1)}(x)$ for $n\geq1$. The polynomial described in equation \eqref{quasi-type kernel Laguerre orthogonal polynomial}, which is obtained by achieving orthogonality of $L^{QC}_n(x;0)$, has a common zero at $x=0$. This zero remains consistent regardless of the values of the parameter $\alpha$, as also shown in Table \ref{Interlacing of quasi-Christoffel orthogonal Laguerre} and Table \ref{Zeros of quasi-Christoffel orthogonal Laguerre}.

\begin{remark}	The derivative of a Laguerre polynomial yields another Laguerre polynomial \cite[page 149]{Chihara book}. Specifically,
	\begin{align}\label{Derivative Laguerre}
		\frac{d\mathcal{L}_n^{\alpha}(x)}{dx}=n\mathcal{L}_{n-1}^{\alpha+1}(x).
	\end{align}
 Thus, it follows that a linear combination of the Christoffel Laguerre polynomial and the derivative of the Laguerre polynomial constitutes an orthogonal polynomial. This can be achieved by substituting \eqref{Derivative Laguerre} into \eqref{quasi-type kernel Laguerre orthogonal polynomial}.
\end{remark}

\begin{remark}We have illustrated a behaviour of the zeros of the polynomial $L^{QC}_n(x;0)$ obtained in solution 1 of Subsection \ref{Sol_QCLP}. However, the polynomial obtained in solution 2 of Subsection \ref{Sol_QCLP} is again a family of the Laguerre orthogonal polynomials, whose numerical illustrations can be found in the literature (see \cite{Driver_Littlejohn_zeros_Laguerre-2021, Driver_Muldoon_zeros-Laguerre_JAT_2015}). Hence, we have not illustrated the zeros of the polynomials obtained in solution 2, explicitly.
\end{remark}
\subsection{Chain sequences and quasi-Christoffel polynomials}

The connection between the Chain sequences and orthogonal polynomials is well known. Specifically, the chain sequences enable us to establish a relationship between the support of measure  and the recurrence coefficients. A sequence $\{s_n\}_{n=1}^{\infty}$ is defined as a chain sequence if there exists a parameter sequence $\{d_n\}_{n=0}^{\infty}$ such that
\begin{align}
	s_n=(1-d_{n-1})d_n,
\end{align}
where $d_0\in [0,1)$ and $d_n\in(0,1)$ for $n\geq 1$.  If $[a,b]$ represents the support of the measure for the orthogonal polynomial $\mathbb{P}_n(x)$ and $t\leq a$, then $\{s_n(t)\}$ for $n=1,2,...$, defined by
\begin{align}\label{chain sequence using recurr para}
	s_n(t)=\frac{\lambda_{n+1}}{(c_n-t)(c_{n+1}-t)},
\end{align}
forms a chain sequence. The expression of the minimal parameter sequence in terms of orthogonal polynomials is also well known. If $t\not\in (a,b)$, then the chain sequence can be expressed as
\begin{align}
	s_n(t)=(1-m_{n-1}(t))m_n(t),
\end{align}
where the parameter chain sequence $\{m_n(t)\}$ for $n=0,1,2,...$ is given by
\begin{align}\label{minimal parameter sequence}
	\nonumber m_0(t)&=0,\\
	m_n(t)&=1-\frac{\mathbb{P}_{n+1}(t)}{(t-c_{n+1})\mathbb{P}_n(t)}, ~~\text{for}~~n\in \mathbb{N}.
\end{align}
For more detailed information about the chain sequence, we refer to \cite{Chihara book}.

The recurrence parameters involved in the Jacobi matrix play a crucial role in the study of chain sequences. Consequently, we discuss the chain sequence corresponding to the quasi-Christoffel polynomials that satisfy \eqref{TTRR_QCP}. Thus, if the polynomials $\mathcal{C}^{Q}_n(x;a)$ are orthogonal with respect to $d\nu$ and $\text{supp}(d\nu)$ represents the support of the measure $d\nu$, then for  $x\leq \text{inf~supp}(d\nu)$, the chain sequence $\tilde{s}_n(x)$ for $n=2,3,..$ is given by
\begin{align}\label{chain sequence using recurr para_QCOP}
	\tilde{s}_n(x)	=\frac{\lambda^{qc}_{n+1}}{(c^{qc}_n-x)(c^{qc}_{n+1}-x)}=\frac{\gamma_n\lambda_{n}^c}{\gamma_{n-1}(c_{n+1}^c+\gamma_{n}-\gamma_{n+1}-x)(c_{n}^c+\gamma_{n-1}-\gamma_{n}-x)},
\end{align}
where the parameter chain sequence $\{\tilde{m}_n(x)\}$ for $n=1,2,...$ is given by
\begin{align}\label{minimal parameter sequence_QCOP}
	\nonumber	\tilde{m}_1(x)&=0,\\
	\tilde{m}_n(x)&=1-\frac{\mathcal{C}^{Q}_{n+1}(x;a)}{(x-c_{n+1}^{qc})\mathcal{C}^{Q}_n(x;a)}, ~~\text{for}~~n\geq2.
\end{align}

\begin{example}[Chain sequence for quasi-Christoffel Laguerre polynomial]
	We observe that exactly one zero of $L^{QC}_n(x;0)$ defined in \eqref{quasi-type kernel Laguerre orthogonal polynomial} lies at the finite end point  of the support of the measure for the Laguerre polynomial. Nevertheless, we can still derive the chain sequence and minimal parameter sequence. This is made possible by the cancellation of the common zero in \eqref{minimal parameter sequence_QCOP}. Utilizing the recurrence parameters of $L^{QC}_n(x;0)$, as defined in \eqref{recurrence coef quasi Christoffel Laguerre}, we derive the chain sequence $\{\tilde{s}_n(x)\}_{n=2}^{\infty}$  at $x=0$. The corresponding chain sequence is given by
	\begin{align*}
		\tilde{s}_n(0)=\frac{(n-1)(n+\alpha+1)}{(2n+\alpha+1)(2n+\alpha-1)}=(1-m_{n-1}(0))m_n(0),
	\end{align*}
	where the parameter sequence $\{\tilde{m}_n(0)\}_{n=1}^{\infty}$ is given by
	\begin{align*}
		\tilde{m}_n(0)=\frac{n-1}{2n+\alpha+1}.
	\end{align*}
	In fact, the parameter sequence $\tilde{m}_n(0)$ is a minimal parameter sequence, as $\tilde{m}_1(0)=0$. It is evident that the minimal parameter sequence $\tilde{m}_n(0)>0$ for $n\geq2$ and $\alpha>-1$. Additionally, the strict upper bound for $\tilde{m}_n(0)$ is $\frac{1}{2}$ for $n\geq2$ and $\alpha>-1$. This can be shown as follows:
	\begin{align*}
		\tilde{m}_n(0)=\frac{n-1}{2n+\alpha+1}\leq \frac{n-1}{2n}=\frac{1}{2}\left(1-\frac{1}{n}\right)<\frac{1}{2}.
	\end{align*}
	Thus, according to \cite[Lemma 2.5]{Behera_Ranga_Swami_SIGMA_2016}, the complementary chain sequence  of the chain sequence $\tilde{s}_n(0)$ is SPPCS.
\end{example}
In a similar manner, we can derive the chain sequence and parameter sequence for the quasi-Christoffel Jacobi polynomial of order one for the various solutions discussed in  Subsection \ref{Sol1-QCJP}.

\section{Quasi-Geronimus polynomial of order one}\label{sec:Quasi-Geronimus}
In this section, we discuss the solutions of the non-linear difference equation that provides the orthogonality conditions for quasi-Geronimus Jacobi polynomials of order one, along with the explicit expression of the corresponding orthogonal polynomials.

 Let us define the linear functional $\mathcal{L}^G$ in terms of the linear functional $\mathcal{L}$ as:
\begin{align*}
	\mathcal{L}^G(p(x))=\mathcal{L}\left(\frac{p(x)-p(a)}{x-a}\right)+N p(a),
\end{align*}
for any polynomial $p$ and constant $N$. This perturbed linear functional $\mathcal{L}^G$ at point $a$ is known as the canonical Geronimus transformation. If $N\neq0$ and $a$ does not belong to the support of the measure for $\mathbb{P}_n$, then there exists a sequence of orthogonal polynomials known as Geronimus polynomials with respect to the linear functional $\mathcal{L}^G$.  The explicit expression of the Geronimus polynomial can be given as:
\begin{align}\label{Geronimus polynomial}
	\mathcal{G}_n(x;a)=\mathbb{P}_n(x)+t_n(a)\mathbb{P}_{n-1}(x), ~n\geq1,
\end{align}
where
\begin{align}
	t_n(a)=-\frac{\mathcal{L}(1)\mathbb{P}^{(1)}_{n-1}(a)+N\mathbb{P}_n(a)}{\mathcal{L}(1)\mathbb{P}^{(1)}_{n-2}(a)+N\mathbb{P}_{n-1}(a)}, ~n\geq 1,
\end{align}
and  the polynomial  $\mathbb{P}^{(1)}_{n}$ of degree $n-1$ is known as either associated polynomial of the first kind or numerator polynomial (see \cite{Chihara book}). The Geronimus polynomial also satisfies the TTRR
\begin{align}\label{Geronimus-TTRR}x\mathcal{G}_n(x;a)=\mathcal{G}_{n+1}(x;a)+c_{n+1}^g\mathcal{G}_n(x;a)+ \lambda_{n+1}^g\mathcal{G}_{n-1}(x;a), ~ n\geq0,
\end{align}
with the recurrence parameter
\begin{align*}
	c_{n+1}^g=c_{n+1}-t_{n}(a)+t_{n+1}(a), n\geq0 ,~~~~\lambda_{n+1}^g=\lambda_{n}\frac{t_{n}(a)}{t_{n-1}(a)}, ~ n\geq1.
\end{align*}
The quasi-orthogonality between two consecutive degrees of Geronimus polynomials, known as the quasi-Geronimus polynomial of order one, is explored in \cite{Vikas_Paco_Swami_quasi-nature}. The next result presents the characterization of the quasi-Geronimus polynomial of order one.

\begin{theorem}\rm\cite{Vikas_Paco_Swami_quasi-nature}\label{quasi-Geronimus}
Let $\mathcal{G}_n(x;a)$ denote the monic polynomial associated with the canonical Geronimus transformation at a certain point $a$. The monic polynomial $\mathcal{G}_{n+1}^Q(x;a)$ of degree $n+1$ is a non-trivial quasi-Geronimus polynomial of order one with respect to $\mathcal{L}^G$ if and only if there exists a sequence of non-zero constants $\chi_n$, such that:
	\begin{align}\label{quasi-Geronimus order1_characterization}
		\mathcal{G}_{n+1}^Q(x;a)= \mathcal{G}_{n+1}(x;a) + \chi_{n+1}\mathcal{G}_{n}(x;a).
	\end{align}
\end{theorem}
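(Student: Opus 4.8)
The plan is to recognize this statement as the specialization, to the case $k=1$, of the general characterization of quasi-orthogonal polynomials of order $k$ recorded in \eqref{quasi orthogonal for general order}, applied now to the orthogonal family $\{\mathcal{G}_n(x;a)\}$ generated by the linear functional $\mathcal{L}^G$ rather than to $\{\mathbb{P}_n\}$. The hypotheses $N\neq 0$ and $a$ outside the support of the measure of $\mathcal{L}$ guarantee, as recalled in the paragraph preceding \eqref{Geronimus polynomial}, that the Geronimus polynomials form a monic orthogonal polynomial sequence with respect to $\mathcal{L}^G$; being monic of strictly increasing degree, they constitute a basis of the space of polynomials. I would prove both implications directly rather than merely citing \eqref{quasi orthogonal for general order}, so that the role of the non-vanishing of $\chi_{n+1}$ stays transparent. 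Recall that, for a polynomial of degree $n+1$, quasi-orthogonality of order one with respect to $\mathcal{L}^G$ means $\mathcal{L}^G(x^r \mathcal{G}_{n+1}^Q(x;a))=0$ for $r=0,1,\ldots,n-1$.

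For the sufficiency direction, assume $\mathcal{G}_{n+1}^Q(x;a)=\mathcal{G}_{n+1}(x;a)+\chi_{n+1}\mathcal{G}_n(x;a)$ with $\chi_{n+1}\neq 0$. For each $r=0,1,\ldots,n-1$ I apply $\mathcal{L}^G$ to $x^r\mathcal{G}_{n+1}^Q(x;a)$ and invoke linearity together with the orthogonality of $\mathcal{G}_{n+1}$ and $\mathcal{G}_n$ to all monomials of lower degree; since $r\leq n-1<n<n+1$, both $\mathcal{L}^G(x^r\mathcal{G}_{n+1})$ and $\mathcal{L}^G(x^r\mathcal{G}_n)$ vanish, whence $\mathcal{L}^G(x^r\mathcal{G}_{n+1}^Q)=0$ for $r=0,\ldots,n-1$, which is exactly quasi-orthogonality of order one. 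The factor $\chi_{n+1}\neq 0$ ensures $\mathcal{G}_{n+1}^Q\neq\mathcal{G}_{n+1}$, so the polynomial is genuinely of order one and not merely orthogonal with respect to $\mathcal{L}^G$.

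For the necessity direction, expand the monic polynomial $\mathcal{G}_{n+1}^Q(x;a)$ of degree $n+1$ in the Geronimus basis as $\mathcal{G}_{n+1}^Q=\mathcal{G}_{n+1}+\sum_{j=0}^{n} a_j\mathcal{G}_j$, where monicity forces the leading coefficient to be $1$. Imposing the quasi-orthogonality conditions $\mathcal{L}^G(x^r\mathcal{G}_{n+1}^Q)=0$ for $r=0,\ldots,n-1$ and using $\mathcal{L}^G(x^r\mathcal{G}_j)=0$ for $r<j$ together with $\mathcal{L}^G(x^j\mathcal{G}_j)=\mathcal{L}^G(\mathcal{G}_j^2)\neq 0$, I obtain a lower-triangular homogeneous linear system for $a_0,\ldots,a_{n-1}$ with nonzero diagonal entries. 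Solving it recursively starting from $r=0$ gives $a_0=a_1=\cdots=a_{n-1}=0$, leaving only $a_n$ free; setting $\chi_{n+1}:=a_n$ yields the asserted representation. Non-triviality of the quasi-Geronimus polynomial, that is, its failure to be orthogonal (equivalently $\mathcal{L}^G(x^n\mathcal{G}_{n+1}^Q)\neq 0$), then forces $\chi_{n+1}\neq 0$, completing the equivalence.

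The argument is essentially routine once the orthogonality of $\{\mathcal{G}_n(x;a)\}$ with respect to $\mathcal{L}^G$ is in hand, so the only point genuinely requiring care is the bookkeeping of the non-triviality condition: I must make sure that ``non-trivial quasi-Geronimus polynomial of order one'' is read as order \emph{exactly} one, so that $\chi_{n+1}$ cannot vanish, and that this matches the sequence condition $\chi_n\neq 0$ stated in the theorem. Everything else reduces to the triangularity of the moment system evaluated against the Geronimus basis, exactly parallel to the proof of Theorem \ref{quasi-type kernel} for the Christoffel case.
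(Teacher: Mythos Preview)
Your proof is correct and follows the natural approach. Note, however, that the paper does not actually prove this theorem: it is quoted from \cite{Vikas_Paco_Swami_quasi-nature} (as indicated by the citation in the theorem heading), so there is no in-paper proof to compare against. Your argument is precisely the specialization of the general characterization \eqref{quasi orthogonal for general order} to the orthogonal family $\{\mathcal{G}_n(x;a)\}$ with $k=1$, which is also how the parallel Theorem~\ref{quasi-type kernel} for the Christoffel case is obtained in \cite{Vikas_Swami_quasi-type kernel}; this is the expected route and nothing more elaborate is needed.
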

In the next result, we present the orthogonality of quasi-Geronimus polynomial of order one under certain assumptions on $\chi_{n}$, as discussed in \cite[Propostion 1]{Vikas_Paco_Swami_quasi-nature}.
\begin{proposition}\label{orthogonality of quasi-Geronimus}
	Let $\mathcal{G}_{n}(x;a)$ be a monic Geronimus polynomial of degree $n$ with respect to the linear functional $\mathcal{L}^G$ at point $a$. Suppose also that $\mathcal{G}_{n}^{Q}(x;a)$ given in \eqref{quasi-Geronimus order1_characterization}, is a monic quasi-Geronimus polynomial of order one with parameter $\chi_{n}$, which satisfies the following non-linear difference equation:
	\begin{align}\label{beta n restriction cond}
		c_{n+1}^g-c_n^g+\chi_{n}-\chi_{n+1}+\frac{\lambda_{n}^g}{\chi_{n-1}}-\frac{\lambda_{n+1}^g}{\chi_{n}}=0, ~n\geq2.
	\end{align}
	With these assumptions, the polynomials $\mathcal{G}_{n}^{Q}(x;a)$ satisfy the TTRR
	\begin{align*}
		\mathcal{G}_{n+1}^{Q}(x;a)-(x-c_{n+1}^{qg})\mathcal{G}_{n}^{Q}(x;a)+\lambda_{n+1}^{qg}\mathcal{G}_{n-1}^{Q}(x;a)=0, ~ n\geq 0,
	\end{align*}
	where the recurrence parameters are given by
	\begin{align*}
		\lambda_{n+1}^{qg}=\frac{\chi_{n}}{\chi_{n-1}}\lambda_{n}^g, ~~~~~~~c_{n+1}^{qg}=c_{n+1}^g+\chi_{n}-\chi_{n+1}.
	\end{align*}
	If $\lambda_{n+1}^{qg}\neq 0$, then  the sequence $\{\mathcal{G}^Q_n(x;a)\}_{n=1}^{\infty}$ becomes orthogonal with respect to a quasi-definite linear functional.
\end{proposition}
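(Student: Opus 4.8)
The plan is to verify the claimed three-term recurrence by direct substitution of the defining relation \eqref{quasi-Geronimus order1_characterization} together with the Geronimus recurrence \eqref{Geronimus-TTRR}, and then to invoke Favard's theorem for the orthogonality conclusion. The whole argument runs parallel to the one behind Proposition \ref{orthogonality of quasi-type kernel}, with the Christoffel data $(c_n^c,\lambda_n^c,\gamma_n)$ replaced throughout by the Geronimus data $(c_n^g,\lambda_n^g,\chi_n)$.

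First I would start from the proposed recurrence
\begin{align*}
	\mathcal{G}_{n+1}^{Q}(x;a)=(x-c_{n+1}^{qg})\mathcal{G}_{n}^{Q}(x;a)-\lambda_{n+1}^{qg}\mathcal{G}_{n-1}^{Q}(x;a),
\end{align*}
expand $\mathcal{G}_{n}^{Q}$ and $\mathcal{G}_{n-1}^{Q}$ on the right using \eqref{quasi-Geronimus order1_characterization}, and then eliminate the factors of $x$ by applying \eqref{Geronimus-TTRR} in the two forms $x\mathcal{G}_n=\mathcal{G}_{n+1}+c_{n+1}^g\mathcal{G}_n+\lambda_{n+1}^g\mathcal{G}_{n-1}$ and $x\mathcal{G}_{n-1}=\mathcal{G}_n+c_n^g\mathcal{G}_{n-1}+\lambda_n^g\mathcal{G}_{n-2}$. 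This rewrites the right-hand side as an explicit linear combination of $\mathcal{G}_{n+1},\mathcal{G}_n,\mathcal{G}_{n-1},\mathcal{G}_{n-2}$, which I would then match against the target $\mathcal{G}_{n+1}+\chi_{n+1}\mathcal{G}_n$ coming from \eqref{quasi-Geronimus order1_characterization}.

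The verification reduces to comparing four coefficients. The coefficient of $\mathcal{G}_{n+1}$ is automatically $1$; substituting $c_{n+1}^{qg}=c_{n+1}^g+\chi_n-\chi_{n+1}$ collapses the coefficient of $\mathcal{G}_n$ to exactly $\chi_{n+1}$; and inserting $\lambda_{n+1}^{qg}=(\chi_n/\chi_{n-1})\lambda_n^g$ makes the coefficient of $\mathcal{G}_{n-2}$ cancel identically. The only place where genuine content enters is the coefficient of $\mathcal{G}_{n-1}$: after dividing through by $\chi_n$ (permissible since $\chi_n\neq0$), setting this coefficient to zero reproduces precisely the nonlinear difference equation \eqref{beta n restriction cond}. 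Hence the hypothesis on $\chi_n$ is exactly the condition that kills the spurious $\mathcal{G}_{n-1}$ term, and the recurrence holds. The main obstacle here is purely organizational rather than conceptual, namely bookkeeping the four coefficients carefully and confirming that \eqref{beta n restriction cond} is the single nontrivial constraint while the others are automatic from the definitions of $c_{n+1}^{qg}$ and $\lambda_{n+1}^{qg}$.

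Finally, since $\{\mathcal{G}_{n}^{Q}(x;a)\}$ is a monic sequence satisfying a three-term recurrence of the stated form with $\lambda_{n+1}^{qg}\neq0$, Favard's theorem in its quasi-definite version (see \cite{Chihara book}) yields a quasi-definite linear functional with respect to which the sequence is orthogonal. One only claims quasi-definiteness, not positive definiteness, because the $\lambda_{n+1}^{qg}$ need not be positive.
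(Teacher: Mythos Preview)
Your proposal is correct and follows exactly the natural route: substitute \eqref{quasi-Geronimus order1_characterization} into the candidate recurrence, eliminate $x$ via \eqref{Geronimus-TTRR}, and compare coefficients in the basis $\{\mathcal{G}_{n+1},\mathcal{G}_n,\mathcal{G}_{n-1},\mathcal{G}_{n-2}\}$; the $\mathcal{G}_{n-1}$ coefficient yields precisely \eqref{beta n restriction cond}, while the others are forced by the definitions of $c_{n+1}^{qg}$ and $\lambda_{n+1}^{qg}$, and Favard's theorem then gives the quasi-definite orthogonality. The paper does not supply an independent proof here but cites \cite{Vikas_Paco_Swami_quasi-nature}; your argument is the direct analogue of the quasi-Christoffel case in Proposition~\ref{orthogonality of quasi-type kernel}, as you note, and is what one expects that reference contains.
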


\subsection{Quasi-Geronimus Jacobi polynomials of order one}
When applying the canonical Geronimus transformation to the Jacobi weight, $w(x)=(1-x)^{\alpha}(1+x)^{\beta}$, with $\alpha>-1$ and $\beta>-1$, using $a=-1$ and $N=2^{\alpha+\beta}\mathbb{B}(\alpha+1,\beta)$, where $\mathbb{B}(\cdot,\cdot)$ denotes the Beta function, the resulting transformed weight becomes $\tilde{w}(x)=(1-x)^{\alpha}(1+x)^{\beta-1}$ for $\alpha>-1$ and $\beta>0$. Therefore, the orthogonal polynomial corresponding to the Geronimus transformed Jacobi weight is again the Jacobi polynomial with parameter $(\alpha,\beta-1)$. Denoted by $\mathcal{G}_n(x;-1):=\mathcal{P}_n^{(\alpha,\beta-1)}(x)$, this polynomial can be obtained using a recurrence relation with recurrence parameters:
 \begin{align*}
	\lambda^g_{n+1}&=\frac{4n(n+\alpha)(n+\beta-1)(n+\alpha+\beta-1)}{(2n+\alpha+\beta-1)^2(2n+\alpha+\beta)(2n+\alpha+\beta-2)},\\
	c^g_{n+1}&=\frac{(\beta-1)^2-\alpha^2}{(2n+\alpha+\beta-1)(2n+\alpha+\beta+1)}.
\end{align*}
The quasi-Geronimus Jacobi polynomial of order one is defined as:
\begin{align}\label{quasi-Geronimus Jacobi poly.}
	P^{QG}_{n}(x;-1)=\mathcal{P}^{(\alpha,\beta-1)}_{n}(x)+\chi_{n}\mathcal{P}^{(\alpha,\beta-1)}_{n-1}(x).
\end{align}

 \subsubsection{\underline{Orthogonality of quasi-Geronimus Jacobi polynomials}}\label{Sol_OGJP}

The polynomial defined in \eqref{quasi-Geronimus Jacobi poly.} becomes orthogonal if $\chi_{n}$ satisfies the following non-linear difference equation:
\begin{align}\label{chi n condition for Jacobi}
	\nonumber	&-\frac{(\beta-1)^2-\alpha^2}{(\alpha+\beta+2 n-3) (\alpha+\beta+2 n-1)}+\frac{(\beta-1)^2-\alpha^2}{(\alpha+\beta+2 n-1) (\alpha+\beta+2 n+1)}+\chi_n-\chi_{n+1}\\
	\nonumber&\hspace{1cm}	-\frac{1}{\chi_{n}}\frac{4 n (\alpha+n) (\beta+n-1) (\alpha+\beta+n-1)}{ (\alpha+\beta+2 n-2) (\alpha+\beta+2
		n-1)^2 (\alpha+\beta +2n)}\\
	&\hspace{3cm}+\frac{1}{\chi_{n-1}}\frac{4 (n-1) (\alpha+n-1) (\beta+n-2) (\alpha+\beta+n-2)}{(\alpha+\beta+2 n-4) (\alpha+\beta+2 n-3)^2 (\alpha+\beta+2 n-2)}=0.
\end{align}
We provide four possible solutions to \eqref{chi n condition for Jacobi}, which are as follows:

\textbf{Solution 1.} It is easy to see that
\begin{align}\label{Explicit chi value for QGJP_Sol1}
	\chi_{n}=-\frac{2(\alpha+n)(n+\alpha+\beta-1)}{(2n+\alpha+\beta-1)(2n+\alpha+\beta-2)},
\end{align}
solves the non-linear difference equation \eqref{chi n condition for Jacobi}.Thus, the polynomial $P^{QG}_{n}(x;-1)$  becomes orthogonal with the $\chi_{n}$ defined in \eqref{Explicit chi value for QGJP_Sol1}. Moreover, the recurrence parameters for obtaining $P^{QG}_{n}(x;-1)$ are given by:
\begin{align}\label{rec._Coeff_quasi_geronimus_Orth._Jacobi}
\nonumber	\lambda^{qg}_{n+1}&=\frac{4(n-1)(n+\alpha)(n+\beta-2)(n+\alpha+\beta-1)}{(2n+\alpha+\beta-2)^2(2n+\alpha+\beta-1)(2n+\alpha+\beta-3)},\\
	c^{qg}_{n+1}&=\frac{(\beta-\alpha-2)(\beta+\alpha)}{(2n+\alpha+\beta-2)(2n+\alpha+\beta)},
\end{align}
and the compact form of the polynomial $P^{QG}_{n}(x;-1)$ given in \eqref{quasi-Geronimus Jacobi poly.} with coefficient $\chi_n$ in \eqref{Explicit chi value for QGJP_Sol1} can be written as:

$\displaystyle P^{QG}_{n}(x;-1)=(x-1)\mathcal{P}^{(\alpha+1,\beta-1)}_{n-1}(x)$
\begin{align}\label{Compactform_QGJOP}
	\hspace{-1.2cm}=\mathcal{P}^{(\alpha,\beta-1)}_{n}(x)-\frac{2(\alpha+n)(n+\alpha+\beta-1)}{(2n+\alpha+\beta-1)(2n+\alpha+\beta-2)}\mathcal{P}^{(\alpha,\beta-1)}_{n-1}(x).
\end{align}
The polynomial $P^{QG}_{n}(x;-1)$ for $n\geq2$ defined in \eqref{quasi-Geronimus Jacobi poly.} with $\chi_{n}$ in \eqref{Explicit chi value for QGJP_Sol1} is orthogonal with respect to the measure $d\mu(x)=(1-x)^{\alpha-1}(1+x)^{\beta-1}dx, \alpha>-1, \beta>-1$.
Also, the monic Jacobi matrix with recurrence parameters $\lambda^{qc}_{n+1}$ and $c^{qc}_{n+1}$ given by \eqref{rec._Coeff_quasi_geronimus_Orth._Jacobi} is also bounded.  The measure corresponding to the orthogonal polynomial defined by \eqref{quasi-Geronimus Jacobi poly.} with $\chi_{n}$ specified in \eqref{Explicit chi value for QGJP_Sol1} belongs to the Nevai class $\mathbb{N}(1/4,0)$. This can be obtained by an argument similar to that in Subsection \ref{sub:QCJP}.


\textbf{Solution 2.} To recover the orthogonality of the polynomial $P^{QG}_{n}(x;-1)$ defined in \eqref{quasi-Geronimus Jacobi poly.}, we find that:
\begin{align}\label{Geronimus_nonlinear_diffeqn_Sol2}
	\chi_{n}=\frac{2n(\alpha+n)}{(2n+\alpha+\beta-1)(2n+\alpha+\beta-2)},
\end{align}
also satisfies the equation \eqref{chi n condition for Jacobi}. With this $\chi_n$, the polynomial $P^{QG}_{n}(x;-1)$ becomes orthogonal and the compact form of the polynomial is given by:
\begin{align}
	P^{QG}_{n}(x;-1):=\mathcal{P}^{(\alpha,\beta-2)}_{n}(x)=\mathcal{P}^{(\alpha,\beta-1)}_{n}(x)+\frac{2n(\alpha+n)}{(2n+\alpha+\beta-1)(2n+\alpha+\beta-2)}\mathcal{P}^{(\alpha,\beta-1)}_{n-1}(x).
\end{align}
The recurrence parameter are given by:
\begin{align}\label{recurrence coef. quasi Geron Jacobi_Sol2}
	\nonumber	\lambda^{qg}_{n+1}&=\frac{4n(n+\alpha)(n+\beta-2)(n+\alpha+\beta-2)}{(2n+\alpha+\beta-2)^2(2n+\alpha+\beta-1)(2n+\alpha+\beta-3)},\\
	c^{qg}_{n+1}&=\frac{(\beta-2)^2-\alpha^2}{(2n+\alpha+\beta)(2n+\alpha+\beta-2)}.
\end{align}
The orthogonality measure of the polynomials $P^{QG}_{n}(x;-1)$ for $n\geq2$ defined in \eqref{quasi-Geronimus Jacobi poly.} with $\chi_{n}$ in \eqref{Geronimus_nonlinear_diffeqn_Sol2} is given by $d\mu(x)=(1-x)^{\alpha}(1+x)^{\beta-2}dx, \alpha>-1, \beta>-1$.

\textbf{Solution 3.} Another solution of the nonlinear difference equation  \eqref{chi n condition for Jacobi} is given by:
\begin{align}\label{Geronimus_nonlinear_diffeqn_Sol3}
	\chi_{n} = -\frac{2n(\beta+n-1)}{(2n+\alpha+\beta-1)(2n+\alpha+\beta-2)}.
\end{align}
Thus the polynomial $P^{QG}_{n}(x;-1)$ defined in \eqref{quasi-Geronimus Jacobi poly.} with $\chi_n$ in \eqref{Geronimus_nonlinear_diffeqn_Sol3} can be written as:
{\small \begin{align}\label{QCGOP_Sol3}
	P^{QG}_{n}(x;-1) := \mathcal{P}^{(\alpha-1,\beta-1)}_{n}(x) = \mathcal{P}^{(\alpha,\beta-1)}_{n}(x)-\frac{2n(\beta+n-1)}{(2n+\alpha+\beta-1)(2n+\alpha+\beta-2)} \mathcal{P}^{(\alpha,\beta-1)}_{n-1}(x),
\end{align}}
and the recurrence parameters are given by
\begin{align}\label{recurrence coef. quasi Geron Jacobi_Sol3}
	\nonumber	\lambda^{qg}_{n+1}&=\frac{4n(n+\alpha-1)(n+\beta-1)(n+\alpha+\beta-2)}{(2n+\alpha+\beta-2)^2(2n+\alpha+\beta-1)(2n+\alpha+\beta-3)},\\
	c^{qg}_{n+1}&=\frac{(\beta-1)^2-(\alpha-1)^2}{(2n+\alpha+\beta)(2n+\alpha+\beta-2)}.
\end{align}
The orthogonality measure of the polynomials $P^{QG}_{n}(x;-1)$ defined in \eqref{QCGOP_Sol3} for $n\geq3$ is given by $d\mu(x)=(1-x)^{\alpha-1}(1+x)^{\beta-1}dx, \alpha>-1, \beta>-1$.

\textbf{Solution 4.}  Also,
\begin{align}\label{Geronimus_Jacobi_nonlinear_difference_Sol4}
	\chi_{n}=\frac{2(\beta+n-1)(n+\alpha+\beta-1)}{(2n+\alpha+\beta-1)(2n+\alpha+\beta-2)},
\end{align}
solves \eqref{chi n condition for Jacobi}. Hence, the polynomial $P^{QG}_{n}(x;-1)$ together with $\chi_n$ in \eqref{Geronimus_Jacobi_nonlinear_difference_Sol4} becomes orthogonal and its compact form is given by::
{\small\begin{align}\label{Jacobi interms extend parameter Jacobi_Sol4}
		P^{QG}_{n}(x;-1) := (x+1)\mathcal{P}^{(\alpha,\beta)}_{n-1}(x) = \mathcal{P}^{(\alpha,\beta-1)}_{n}(x) + \frac{2(\beta+n-1)(n+\alpha+\beta-1)}{(2n+\alpha+\beta-1)(2n+\alpha+\beta-2)} \mathcal{P}^{(\alpha,\beta-1)}_{n-1}(x).
\end{align}}
The corresponding recurrence parameters are:
\begin{align}\label{rec._Coeff_quasi_geronimus_Orth._Jacobi_Sol4}
	\nonumber	\lambda^{qg}_{n+1}&=\frac{4(n-1)(n+\alpha-1)(n+\beta-1)(n+\alpha+\beta-1)}{(2n+\alpha+\beta-2)^2(2n+\alpha+\beta-1)(2n+\alpha+\beta-3)},\\
	c^{qg}_{n+1}&=\frac{\beta^2-\alpha^2}{(2n+\alpha+\beta)(2n+\alpha+\beta-2)}.,
\end{align}
The orthogonality measure of the polynomials $P^{QG}_{n}(x;-1)$ for $n\geq2$ defined in \eqref{quasi-Geronimus Jacobi poly.} with $\chi_{n}$ in \eqref{Geronimus_Jacobi_nonlinear_difference_Sol4} is given by $d\mu(x)=(1-x)^{\alpha}(1+x)^{\beta-2}dx, \alpha>-1, \beta>-1$.
\begin{remark}
	Any other solution of \eqref{chi n condition for Jacobi} gives the orthogonality of the quasi-Geronimus Jacobi polynomial \eqref{quasi-Geronimus Jacobi poly.}, whose measure is the superposition of Christoffel and Geronimus transformations of the measures obtained in solutions 1, 2, 3, and 4 discussed in Subsection \ref{Sol_OGJP}.
\end{remark}

\subsubsection{\underline{Zeros of quasi-Geronimus and Christoffel Jacobi polynomials}}

In this subsection, we demonstrate the interlacing property between the zeros of $J^{QC}_{n}(x;-1)$ and $P^{QG}_{n}(x;-1)$. For $\alpha=1$ and $\beta=0.5$, Table \ref{Zeros_QCJP_QGOP} and Figure \ref{Interlace_Zeros_QCJOP_QGJOP_n6} demonstrate that the zeros of $P^{QG}_{6}(x;-1)$ and $J^{QC}_{6}(x;-1)$ in \eqref{quasi-Christoffel Jacobi poly.} with $\gamma_n$ \eqref{Explicit gamma value for QCJP_Sol1} interlace inside the support of the measure. Similarly, interlacing occurs with parameter $\alpha=2$ and $\beta=1$, as shown in Table \ref{Zeros_QCJP_QGOP} and Figure \ref{Interlace_Zeros_QCJOP_QGJOP_n5}.

\begin{table}[H]
	\begin{center}
		\resizebox{!}{1.6cm}{\begin{tabular}{|c|c|c|c|}
				\hline
				\multicolumn{2}{|c|}{Zeros of $J^{QC}_n(x;-1)$ \eqref{quasi-Christoffel Jacobi poly.} with $\gamma_n$ in \eqref{Explicit gamma value for QCJP_Sol1}}&\multicolumn{2}{|c|}{Zeros of $P^{QG}_n(x;-1)$}\\
				\hline
				$n=6$,	$\alpha=1$, $\beta=0.5$ &$n=5$, $\alpha=2$, $\beta=1$&$n=6$, $\alpha=1$, $\beta=0.5$ &$n=5$, $\alpha=2$, $\beta=1$\\
				\hline
				-0.810015&-0.728794&-0.967813&-0.915694\\
				\hline
				-0.47303&-0.325544&-0.722321&-0.580566\\
				\hline
				-0.047073&-0.147611&-0.292037&-0.071692\\
				\hline
				0.389257&0.599035&0.216697&0.477044\\
				\hline
				0.755676&1&0.678518&1\\
				\hline
				1&-&1&-\\
				\hline
		\end{tabular}}
		\captionof{table}{Zeros of  $P^{QG}_n(x;-1)$ with $\chi_{n}$ given in \eqref{Explicit chi value for QGJP_Sol1}}
		\label{Zeros_QCJP_QGOP}
	\end{center}
\end{table}
\begin{figure}[H]
	\includegraphics[scale=0.9]{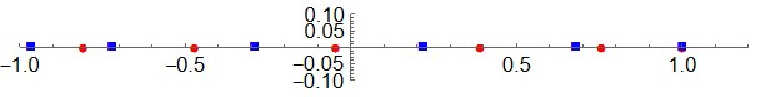}
	\caption{Zeros of $P^{QG}_{6}(x;-1)$ (blue squares) and $J^{QC}_{6}(x;-1)$ (red circles).}
	\label{Interlace_Zeros_QCJOP_QGJOP_n6}
\end{figure}
\begin{figure}[H]
	\includegraphics[scale=0.9]{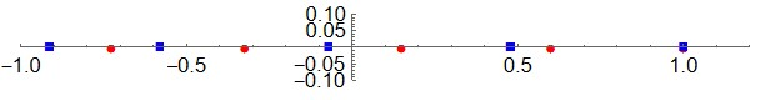}
	\caption{Zeros of $P^{QG}_{5}(x;-1)$ (blue squares) and $J^{QC}_{5}(x;-1)$ (red circles).}
	\label{Interlace_Zeros_QCJOP_QGJOP_n5}
\end{figure}

In this work, computations of the zeros and graphical representations, showcasing their interlacing properties, are conducted using the $\text{Mathematica}^{\text{\textregistered}}$ software.

\section{Quasi-nature spectral polynomials on the unit circle}\label{Sec:Conclusion}
%

Examples were considered in the previous two sections to analyse the behaviour of zeros of quasi-natured spectral polynomials that are defined on subsets of $\mathbb{R}$. To obtain further information, it is proposed to extend the study to the unit circle. We conclude this manuscript by analyzing the location of zeros of the quasi-Christoffel polynomial on the unit circle through specific examples. Additionally, we propose the problem to extend these examples to a more general framework.

 Let $d\hat{\nu}$ denote a positive Borel measure on the unit circle $\partial\mathbb{D}$, and let $\{\Phi_n\}_{n \geq 0}$ represent the sequence of monic orthogonal polynomials corresponding to $d\hat{\nu}$ on $\partial\mathbb{D}$. Starting with the initial condition $\Phi_0(z) = 1$ and a sequence of complex numbers $\{\alpha_n\}_{n\geq0}$ within the unit disc $\mathbb{D}$, one can construct the sequence $\{\Phi_n\}_{n \geq 0}$ using Szegő recursion. The constants $\alpha_n$, known as Verblunsky coefficients, are related to the polynomials $\Phi_n$ through the expression $\alpha_n = -\overline{\Phi_{n+1}(0)}$. For a comprehensive discussion of orthogonal polynomials on the unit circle, see \cite{SimonOPUC1}.

  Consider the measure $d\hat{\mu} = |z - \tilde{\gamma}|^2 d\hat{\nu}$, which is the canonical Christoffel transformation of $d\hat{\nu}$ on $\partial\mathbb{D}$, where $\tilde{\gamma} \in \mathbb{C}$ (see \cite{Garza_CT for matrix measure_CMFT_2021}). If the Christoffel-Darboux (CD) kernel, denoted by $\mathcal{K}_{n}(\tilde{\gamma}, \tilde{\gamma}, \hat{\nu})$, is strictly positive for $n \geq 0$, then there exists a sequence of monic orthogonal polynomials $\{\Phi_n(z; \tilde{\gamma})\}_{n \geq 0}$ with respect to $d\hat{\mu}$ (see \cite[Proposition 2.4]{Paco_ST for HTM_JCAM_2007}). The polynomial $\Phi_n(z; \tilde{\gamma})$ is given by
\begin{align}
	\Phi_{n-1}(z; \tilde{\gamma}) = \frac{1}{z - \tilde{\gamma}} \left( \Phi_n(z) - \frac{\Phi_n(\tilde{\gamma})}{\mathcal{K}_{n-1}(\tilde{\gamma}, \tilde{\gamma}, \nu)} \mathcal{K}_{n-1}(z, \tilde{\gamma}, \nu) \right).
\end{align}

We define the quasi-Christoffel polynomial of order one on the unit circle as
\begin{align}\label{QCP_Unit_circle}
\Phi_{n}(z;\tilde{\gamma},a_n)=\Phi_{n}(z;\tilde{\gamma})-a_n\Phi_{n-1}(z;\tilde{\gamma}).
\end{align}
Next, we analyse the location of the zeros of quasi-Christoffel polynomials on the unit circle for various choices of $\tilde{\gamma}$. Specifically, consider $d\hat{\nu}$ as the normalized Lebesgue measure and let $\tilde{\gamma} = 1$. In this case, the Christoffel-transformed measure becomes $d\hat{\mu} = |z - 1|^2 \frac{d\theta}{2\pi}$, and the corresponding Verblunsky coefficients  $\alpha_n = -\frac{1}{n+2}$ for $n \geq 0$. The monic orthogonal polynomials $\Phi_n(z; 1)$ with respect to $d\hat{\mu}$ can be expressed as:
\begin{align}
	\Phi_{n-1}(z; 1) = \frac{1}{z - 1} \left( z^n - \frac{1}{n} \sum_{k=0}^{n-1} z^k \right).
\end{align}
Thus, \eqref{QCP_Unit_circle} can be written as
\begin{align}\label{Quasi OP wrt CT measure with gamma 1}
	\Phi_{n}(z;1,a_n)
	=\frac{1}{z-1}\left(z^{n}(z-a_n)+\frac{a_n}{n}\sum_{k=0}^{n-1}z^k-\frac{1}{n+1}\sum_{k=0}^{n}z^k\right).
\end{align}
For arbitrary choices of the parameter $a_n$, the polynomial $\Phi_{n}(z; 1, a_n)$ may not satisfy orthogonality condition. Consequently, its zeros are not guaranteed to remain within the unit disc. Numerical experiments further indicate that some zeros of $\Phi_{n}(z; 1, a_n)$ can appear outside the unit disc.

Table \ref{Zeros at gamma 1} and Figure \ref{Location of zeros of QuasiOPUC for 1} illustrate that the zeros of $\Phi_n(z;1)$ are located inside the unit disc. However, for various values of $a_n$, such as $a_n = \frac{1}{n+1} - i$ and $a_n = -1.16$, at most one zero of $\Phi_n(z;1, a_n)$ lie outside the unit disc. On the other hand, when $a_n = \frac{n}{n+1}$, the quasi-Christoffel polynomial of order one \eqref{Quasi OP wrt CT measure with gamma 1} at $\tilde{\gamma} = 1$ becomes orthogonal with respect to the normalized Lebesgue measure, ensuring that all zeros lie inside the unit disc. 

\begin{table}[H]
	\begin{center}
		\resizebox{!}{1.51cm}{\begin{tabular}{|c|c|c|c|c|}
				\hline
				\multicolumn{2}{|c|}{Zeros of $\Phi_n(z;1)$}&\multicolumn{3}{|c|}{Zeros of $\Phi_n(z;1,a_n)$}\\
				\hline
				$n=5$ &$n=6$&$n=5$, $a_n=\frac{1}{n+1}-i$   &$n=6$, $a_n=-1.16$&$n=5, a_n=\frac{n}{n+1}$\\
				\hline
				0.294195-0.668367i&0.410684-0.639889i&0.303024-0.987019i&-1.07313&0.0\\
				\hline
				0.294195+0.668367i&0.410684+0.639889i&-0.113232-0.862069i&-0.941198&0.0\\
				\hline
				-0.375695-0.570175i&-0.205144-0.683797i&0.204197-0.629154i&0.348803-0.674876i&0.0\\
				\hline
				-0.375695+0.570175i&-0.205144+0.683797i&-0.617454-0.213182i&0.348803+0.674876i&0.0\\
				\hline
				-0.670332&-0.634112-0.287655i&-0.443202+0.4331161i&-0.350213-0.67387i&0.0\\
				\hline
				-&-0.634112+0.287655i&-&-0.350213+0.67387i&-\\
				\hline
		\end{tabular}}
		\captionof{table}{Zeros of  $\Phi_n(z;1)$ and $\Phi_n(z;1,a_n)$}
		\label{Zeros at gamma 1}
	\end{center}
\end{table}

\begin{minipage}{1.0\linewidth}
	\centering
	\begin{minipage}{0.4\linewidth}
		\begin{figure}[H]
			\includegraphics[width=\linewidth]{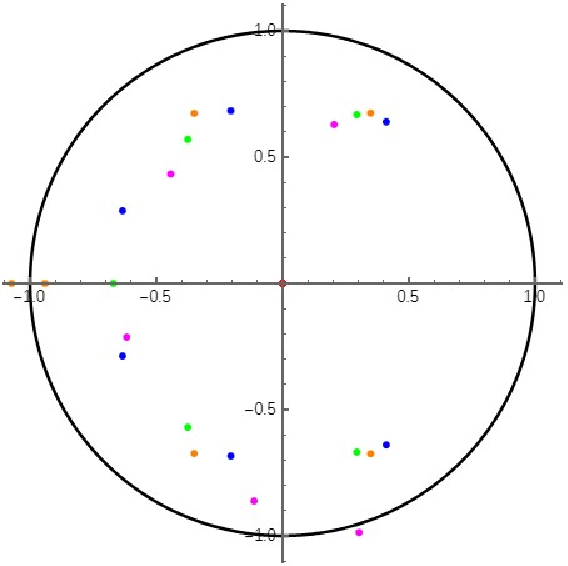}
			\captionof{figure}{{${}$}\\Zeros of $\Phi_5(z;1)$ (Green),\\ $\Phi_6(z;1)$ (blue),\\ $\Phi_5(z;1, -1.16)$ (orange),\\ $\Phi_5(z;1, \frac{n}{n+1})$ (red) and\\ $\Phi_5(z;1, \frac{1}{n+1}-i)$\\ (magenta).}
			\label{Location of zeros of QuasiOPUC for 1}
		\end{figure}
	\end{minipage}
	\hspace{0.06\linewidth}
	\begin{minipage}{0.4\linewidth}
		\begin{figure}[H]
			\includegraphics[width=\linewidth]{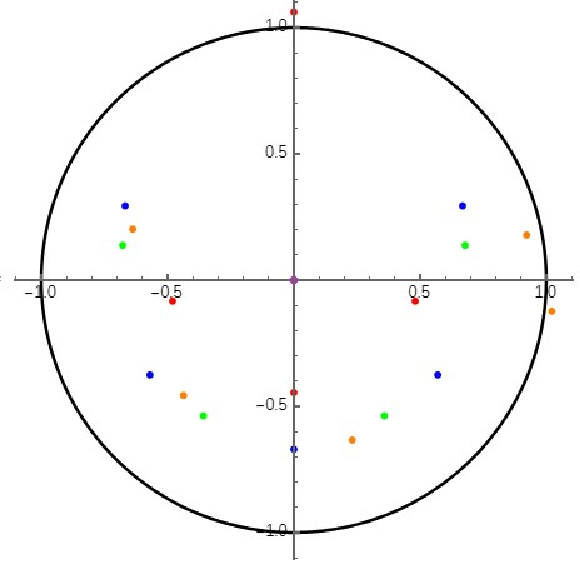}
			\captionof{figure}{{${}$}\\Zeros of $\Phi_4(z;i)$ (Green),\\ $\Phi_5(z;i)$ (blue),\\ $\Phi_4(z;i, \frac{n+1}{n}i)$ (red), \\ $\Phi_5(z;i, 1.1)$ (orange) and\\ $\Phi_4(z;1, \frac{n}{n+1}i)$ (magenta).}
			\label{Location of zeros of QuasiOPUC for i}
		\end{figure}
	\end{minipage}
\end{minipage}


%

We also observe the location of the zeros of the quasi-Christoffel polynomial at $\tilde{\gamma} = i$. In this case, the Verblunsky coefficients are given by $\alpha_n = \frac{(-1)^{n+2} i^{n+1}}{n+2}$, and the corresponding orthogonal polynomial is given by
\begin{align*}
	\Phi_{n-1}(z; i) = \frac{1}{z - i} \left( z^n - \frac{i^n}{n} \sum_{k=0}^{n-1} (-i)^k z^k \right).
\end{align*}


Thus, \eqref{QCP_Unit_circle} can be written as
\begin{align*}
	\Phi_{n}(z;i, a_n)=\frac{1}{z-i}\left(z^{n}(z-a_n)+\frac{a_n i^n}{n}\sum_{k=0}^{n-1}(-i)^kz^k-\frac{i^{n+1}}{n+1}\sum_{k=0}^{n}(-i)^kz^k\right).
\end{align*}

\begin{table}[htb!]
	\begin{center}
		\resizebox{!}{1.4cm}{\begin{tabular}{|c|c|c|c|c|}
				\hline
				\multicolumn{2}{|c|}{Zeros of $\Phi_n(z;i)$}&\multicolumn{3}{|c|}{Zeros of $\Phi_n(z;i,a_n)$}\\
				\hline
				$n=4$ &$n=5$&$n=4$, $a_n=\frac{n+1}{n}i$   &$n=5$, $a_n=1.1$&$n=4, a_n=\frac{n}{n+1}i$\\
				\hline
				0.67815+0.13783i&0.668367+0.294195i&1.0616i&1.02185-0.123134i&0.0\\
				\hline
				-0.67815+0.13783i&-0.668367+0.294195i&-0.480542-0.08298i&0.922406+0.178518i&0.0\\
				\hline
				-0.358285-0.53783i&-0.570175-0.375695i&0.480542-0.08298i&0.231037-0.633575i&0.0\\
				\hline
				0.358285-0.53783i&0.570175-0.375695i&-0.445624i&-0.638318+0.201751i&0.0\\
				\hline
				-&-0.670332i&-&-0.436978-0.456894i&-\\
				\hline
		\end{tabular}}
		\captionof{table}{Zeros of  $\Phi_n(z;i)$ and $\Phi_n(z;i,a_n)$}
		\label{Zeros at gamma i}
	\end{center}
\end{table}


Table \ref{Zeros at gamma i} and Figure \ref{Location of zeros of QuasiOPUC for i} shows that no zero of $\Phi_{n}(z;i)$ lie outside the unit disc. It is observed that for $a_n = \frac{n+1}{n}i$ and $a_n = 1.1$, at most one zero of $\Phi_n(z;i,a_n)$ lie outside the unit disc. However, when $a_n = \frac{n}{n+1}i$, the quasi-Christoffel polynomial $\Phi_n(z;i,a_n)$ becomes orthogonal with respect to the normalized Lebesgue measure.

These numerical experiments leads to the problem of finding the condition on the coefficients $a_n$:

\begin{enumerate}
\item	What are the conditions imposed on the coefficients $a_n$ in \eqref{QCP_Unit_circle} such that the polynomial $\Phi_{n}(z;\tilde{\gamma}, a_n)$
becomes orthogonal with respect to a certain measure?
\end{enumerate}

 \hspace{0.1cm}

	{\bf Acknowledgments}.  The second author acknowledges the support from Project No. NBHM/RP-1/2019 of National Board for Higher Mathematics (NBHM), DAE, Government of India.

\noindent
{\bf{\underline{Corresponding Author}}}.
A. Swaminathan, Department of Mathematics, IIT Roorkee, Roorkee, 247 667, India, Email: a.swaminathan@ma.iitr.ac.in

\noindent
{\bf{\underline{Ethical Approval}}}. No such approval is required as this work does not involve any such human and/or animal studies.

\noindent
{\bf{\underline{Data Availability}}}. Data sharing not applicable to this article as no datasets were generated or analysed during the current study.

\noindent
{\bf{\underline{Conflict of Interest}}}. There is no conflict of interest related to this manuscript.

\noindent
{\bf{\underline{Funding}}}. The second author acknowledges the support from Project No. NBHM/RP-1/2019 of National Board for Higher Mathematics (NBHM), DAE, Government of India.

\noindent
{\bf{\underline{Competing Interest}}}. The authors have no relevant financial or non-financial interests to disclose.

\noindent
{\bf{\underline{Authors contribution}}}. All authors contributed equally. All authors read and approved the final manuscript.


\begin{thebibliography}{55}
	\bibitem{When do linear}M. Alfaro, F. Marcellán, A. Pe$\tilde{\text{n}}$a and M. Luisa Rezola,   When do linear combinations of orthogonal polynomials yield new sequences of orthogonal polynomials?, J. Comput. Appl. Math. {\bf 233} (2010), no.~6, 1446--1452.
	\bibitem{Manas_SOP_Gaus-Borel factor_2024_AMP}G. Araznibarreta, M. Ma\~nas and P. Tempesta, Sobolev orthogonal polynomials, Gauss-Borel factorization and perturbations, Anal. Math. Phys. {\bf 14} (2024), no.~4, 44 pp.
	DOI 10.1007/s13324-024-00883-5.
	\bibitem{Derevyagin_Bailey_CJM by DT} R. Bailey\ and\ M.~S. Derevyagin, Complex Jacobi matrices generated by Darboux transformations, J. Approx. Theory {\bf 288} (2023) 33 pp., DOI 10.1016/j.jat.2023.105876.
	\bibitem{Beardon_Driver_Interlace quasi OP and OP}A.~F. Beardon\ and\ K.~A. Driver, The zeros of linear combinations of orthogonal polynomials, J. Approx. Theory {\bf 137} (2005), no.~2, 179--186.
	\bibitem{Behera_Ranga_Swami_SIGMA_2016}K.~K. Behera, A. Sri~Ranga\ and\ A. Swaminathan, Orthogonal polynomials associated with complementary chain sequences, SIGMA Symmetry Integrability Geom. Methods Appl. {\bf 12} (2016),  17 pp., DOI 10.3842/SIGMA.2016.075.
	\bibitem{Branquinho_Ana_Manas_MOP_Pearson equation_Christofell_2022_AMP}A. Branquinho, A. Foulqui\'e~Moreno and M. Ma\~nas, Multiple orthogonal polynomials: Pearson equations and Christoffel formulas, Anal. Math. Phys. {\bf 12} (2022), no.~6, 59 pp. DOI 10.1007/s13324-022-00734-1.
	\bibitem{Littlejohn_2012_Jacobi_Sobolev}A. Bruder\ and\ L.~L. Littlejohn, Nonclassical Jacobi polynomials and Sobolev orthogonality, Results Math. {\bf 61} (2012), no.~3-4, 283--313.
	\bibitem{Driver_Littlejohn_zeros_Laguerre-2021}J. A. ~Carballo, K.~A. Driver and L.~L. Littlejohn, Interlacing of zeros of Laguerre polynomials of equal and consecutive degree, Integral Transforms Spec. Funct. {\bf 32} (2021), no.~5-8, 346--360.
	\bibitem{Driver_Littlejohn_zeros_Jacobi-2023}J. A. ~Carballo, K.~A. Driver and L.~L. Littlejohn, Zeros of Jacobi and ultraspherical polynomials, Ramanujan J. {\bf 61} (2023), no.~2, 629--648.
	\bibitem{Casarino_Martini_Pointwise estimates_Jacobi_CR Acad_2021}V. Casarino, P. Ciatti\ and\ A. Martini, Uniform pointwise estimates for ultraspherical polynomials, C. R. Math. Acad. Sci. Paris {\bf 359} (2021), 1239--1250.
\bibitem{Chihara book}T. S. Chihara, {\it An Introduction to Orthogonal Polynomials}, Mathematics and its Applications, Vol. 13, Gordon and Breach Science Publishers, New York, 1978.
\bibitem{Paco_ST for HTM_JCAM_2007}L. Darius, \ J. Hernández and\ F. Marcellán, Spectral transformations for Hermitian Toeplitz matrices, J. Comput. Appl. Math. {\bf 202} (2007) 155--176.
\bibitem{Draux2016} A. Draux,  On quasi-orthogonal polynomials of order $r$, Integral Transforms Spec. Funct. {\bf 27} (2016), no.~9, 747--765.
\bibitem{Driver_Jordaan_Mbuyi_Zeros-Jacobi-differentparam-numalg_2008}K.~A. Driver, K. Jordaan and N. Mbuyi, Interlacing of the zeros of Jacobi polynomials with different parameters, Numer. Algorithms {\bf 49} (2008), no.~1-4, 143--152.
\bibitem{Driver_Muldoon_zeros-Laguerre_JAT_2015}K.~A. Driver and M.~E. Muldoon, Common and interlacing zeros of families of Laguerre polynomials, J. Approx. Theory {\bf 193} (2015), 89--98.
\bibitem{Garza_CT for matrix measure_CMFT_2021}H. Dueñas, \ E. Fuentes and\ L. E. Garza, On a Christoffel Transformation for Matrix Measures Supported on the Unit Circle, Comput. Methods Funct. Theory {\bf 21}, (2021) 219–-243.
\bibitem{Duran_D-operator_Krall_polynomials_JAT_2013}A.~J. Dur\'{a}n, Using $\mathcal{D}$-operators to construct orthogonal polynomials satisfying higher order difference or differential equations, J. Approx. Theory {\bf 174} (2013), 10--53.
\bibitem{Everitt_Littlejohn_Wellman_Sobolev orth_Laguerre_JCAM_2004}W.~N. Everitt, L.~L. Littlejohn\ and\ R. Wellman, The Sobolev orthogonality and spectral analysis of the Laguerre polynomials $\{L^{-k}_n\}$ for positive integers $k$, J. Comput. Appl. Math. {\bf 171} (2004), no.~1-2, 199--234.
\bibitem{Paco Book_2021} J.~C. Garc\'{\i}a-Ardila, F. Marcell\'{a}n\ and\ M.~E. Marriaga, {\it Orthogonal polynomials and linear functionals---an algebraic approach and applications}, EMS Series of Lectures in Mathematics, EMS Press, Berlin, 2021.
\bibitem{Hajmirzaahmad_Laguerre for alpha -1}M. Hajmirzaahmad, Laguerre polynomial expansions, J. Comput. Appl. Math. {\bf 59} (1995), no.~1, 25--37.
\bibitem{Ismail_2019_quasi-orthogonal} M. E. H. Ismail\ and\ X. S. Wang,  On quasi-orthogonal polynomials: their differential equations, discriminants and electrostatics, J. Math. Anal. Appl. {\bf 474} (2019), no.~2, 1178--1197.
\bibitem{Jooste_Jordaan_zeros_quasi Meixner_Dolomites_2023}A.~S. Jooste\ and\ K. Jordaan, On zeros of quasi-orthogonal Meixner polynomials, Dolomites Res. Notes Approx. {\bf 16} (2023), Special Issue FAATNA2022, 48--56.
\bibitem{Joulak_Quasi_orthogonal_2005_ANM} H. Joulak, A contribution to quasi-orthogonal polynomials and associated polynomials, Appl. Numer. Math. {\bf 54} (2005), no.~1, 65--78.
\bibitem{Vikas_Paco_Swami_quasi-nature} V. Kumar, F. Marcell\'an and A. Swaminathan, Recovering orthogonality from quasi-nature of Spectral transformations, 35pp., Accepted in Bull. Malays. Math. Sci. Soc., DOI 10.1007/s40840-024-01805-1.
\bibitem{Vikas_Swami_quasi-type kernel} V. Kumar and A. Swaminathan, Recovering orthogonality from quasi-type kernel polynomials using specific spectral transformations, Math. Methods Appl. Sci. (2024), 1–27, DOI 10.1002/mma.10568.
\bibitem{Lasserre_Pell's eqn_CR Acad_2023}J.-B. Lasserre, Pell's equation, sum-of-squares and equilibrium measures on a compact set, C. R. Math. Acad. Sci. Paris {\bf 361} (2023), 935--952.
\bibitem{Lubinsky_SCM_NC_ProcAMS_1991}D.~S. Lubinsky, Singularly continuous measures in Nevai's class $M$, Proc. Amer. Math. Soc. {\bf 111} (1991), no.~2, 413--420.
\bibitem{Peherstorfer_quasi orthogonal_MOC_1996}F. Peherstorfer, Linear combinations of orthogonal polynomials generating positive quadrature formulas, Math. Comp. {\bf 55} (1990), no.~191, 231--241.
\bibitem{Riesz} M. Riesz,  Sur le probl\'{e}me des moments, Troisième note, Ark. Mat. Astr. Fys., {\bf17} (1923), 1-52.

\bibitem{Sawa_Uchida_CD kernel_COP_Tran. AMS_2020}M. Sawa\ and\ Y. Uchida, Algebro-geometric aspects of the Christoffel-Darboux kernels for classical orthogonal polynomials, Trans. Amer. Math. Soc. {\bf 373} (2020), no.~2, 1243--1264.
	\bibitem{Shohat} J. Shohat,  On mechanical quadratures, in particular, with positive coefficients, Trans. Amer. Math. Soc. {\bf 42} (1937), no.~3, 461--496.
	\bibitem{SimonOPUC1}B. Simon, {\it Orthogonal Polynomials on the Unit Circle. Part 1. Classical Theory. Part 1}, Amer. Math. Soc. Colloq. Publ., vol.54, Amer. Math. Soc., Providence, RI, 2005.
	\bibitem{B.SimonSzegoDescendants}B. Simon, {\it Szegő's Theorem and its Descendants. Spectral Theory for $L^2$ Perturbations of Orthogonal Polynomials}, Princeton University Press, Princeton, NJ, 2011.
	\bibitem{Szego}G. Szeg\H{o}, {\it Orthogonal polynomials}, fourth edition, American Mathematical Society Colloquium Publications, Vol. XXIII, Amer. Math. Soc., Providence, RI, 1975.
	\bibitem{Zhedanov_RST_OP_JCAM_1997}A. Zhedanov, Rational spectral transformations and orthogonal polynomials, J. Comput. Appl. Math. {\bf 85} (1997), no.~1, 67--86.
\end{thebibliography}
\end{document}